\newcommand{\email}[1]{\href{mailto:#1}{\nolinkurl{#1}}}
\renewcommand{\leq}{\ensuremath{\leqslant}}
\renewcommand{\geq}{\ensuremath{\geqslant}}
\newcommand{\minimize}[2]{\ensuremath{\underset{\substack{{#1}}}%
{\text{\rm minimize}}\;\;#2 }}
\newcommand{\Frac}[2]{\displaystyle{\frac{#1}{#2}}} 
\newcommand{\scal}[2]{{\left\langle{{#1}\mid{#2}}\right\rangle}}
\newcommand{\menge}[2]{\big\{{#1}~\big |~{#2}\big\}}
\newcommand{\HH}{\ensuremath{{\mathcal H}}}
\newcommand{\Sum}{\ensuremath{\displaystyle\sum}}
\newcommand{\emp}{\ensuremath{{\varnothing}}}
\newcommand{\Id}{\ensuremath{\mathrm{Id}}\,}
\newcommand{\RR}{\ensuremath{\mathbb{R}}}
\newcommand{\RP}{\ensuremath{\left[0,+\infty\right[}}
\newcommand{\RPP}{\ensuremath{\left]0,+\infty\right[}}
\newcommand{\RPX}{\ensuremath{\left[0,+\infty\right]}}
\newcommand{\RX}{\ensuremath{\left]-\infty,+\infty\right]}}
\newcommand{\NN}{\ensuremath{\mathbb N}}
\newcommand{\weakly}{\ensuremath{\:\rightharpoonup\:}}
\newcommand{\zer}{\ensuremath{\text{\rm zer}\,}}
\newcommand{\pinf}{\ensuremath{{+\infty}}}
\newcommand{\dom}{\ensuremath{\text{\rm dom}\,}}
\newcommand{\prox}{\ensuremath{\text{\rm prox}}}
\newcommand{\Fix}{\ensuremath{\text{\rm Fix}\,}}
\newcommand{\ii}{\ensuremath{\mathrm i}}
\newcommand{\gra}{\ensuremath{\text{\rm gra}}}
\newcommand{\inte}{\ensuremath{\text{\rm int}\,}}
\newcommand{\zeroun}{\ensuremath{\left]0,1\right[}}
\newtheorem{theorem}{Theorem}[section]
\newtheorem{lemma}[theorem]{Lemma}
\newtheorem{corollary}[theorem]{Corollary}
\newtheorem{proposition}[theorem]{Proposition}
\theoremstyle{plain}{\theorembodyfont{\rmfamily}%
}
\theoremstyle{plain}{\theorembodyfont{\rmfamily}%
}
\theoremstyle{plain}{\theorembodyfont{\rmfamily}%
\newtheorem{remark}[theorem]{Remark}}
\theoremstyle{plain}{\theorembodyfont{\rmfamily}%
}
\theoremstyle{plain}{\theorembodyfont{\rmfamily}%
}
\theoremstyle{plain}{\theorembodyfont{\rmfamily}%
\newtheorem{definition}[theorem]{Definition}}
\theoremstyle{plain}{\theorembodyfont{\rmfamily}%
}
\numberwithin{equation}{section}
\begin{document}

\title{\sffamily\huge Compositions and Convex Combinations of\\ 
Averaged Nonexpansive Operators\footnote{Contact 
author: P. L. Combettes, {\ttfamily plc@ljll.math.upmc.fr},
phone: +33 1 4427 6319, fax: +33 1 4427 7200.}}

\author{Patrick L. Combettes$^1$ and Isao Yamada$^2$\\[5mm]
\small $\!^1$Sorbonne Universit\'es -- UPMC Univ. Paris 06\\
\small UMR 7598, Laboratoire Jacques-Louis Lions\\
\small F-75005 Paris, France\\
\small \ttfamily{plc@ljll.math.upmc.fr}\\[4mm]
\small $\!^2$Tokyo Institute of Technology\\
\small Department of Communications and Computer Engineering\\
\small Tokyo 152-8550, Japan\\
\small \ttfamily{isao@sp.ce.titech.ac.jp}\\[4mm]
}

\date{~}

\maketitle

\vskip 8mm

\begin{abstract} \noindent
Properties of compositions and convex combinations of averaged 
nonexpansive operators are investigated and applied to the
design of new fixed point algorithms in Hilbert spaces. An extended
version of the forward-backward splitting algorithm for finding a
zero of the sum of two monotone operators is obtained.
\end{abstract} 

{\bfseries Keywords.}
averaged operator $\cdot$
fixed-point algorithm $\cdot$
forward-backward splitting $\cdot$
monotone operator $\cdot$
nonexpansive operator

\newpage

\section{Introduction}

Since their introduction in \cite{Bail78}, averaged nonexpansive 
operators have proved to be very useful in the analysis and the
numerical solution of problems arising in nonlinear analysis and 
its applications; see, e.g., \cite{Bail92,Bail14,Baus96,Livre1,%
Byrn04,Cegi12,Opti04,Cond13,Crom07,Ogur02,Ragu13,Weny08,%
Yama01,Yama04}.

\begin{definition}
\label{d:averaged}
Let $\HH$ be a real Hilbert space, let $D$ be a nonempty subset of 
$\HH$, let $\alpha\in\zeroun$, and let $T\colon D\to\HH$ be a
nonexpansive (i.e., 1-Lipschitz) operator.
Then $T$ is averaged with constant $\alpha$, or 
$\alpha$-averaged, if there exists a nonexpansive operator 
$R\colon D\to\HH$ such that $T=(1-\alpha)\Id+\alpha R$.
\end{definition}

As discussed in \cite{Livre1,Opti04,Ogur02}, averaged operators 
are stable under compositions and convex combinations and such
operations form basic building blocks in various composite fixed 
point algorithms. The averagedness constants resulting from such
operations determine the range of the step sizes and other 
parameters in such algorithms. It is therefore important that they 
be tight since these parameters have a significant impact on the 
speed of convergence. 

In this paper, we discuss averagedness constants for compositions 
and convex combinations of averaged operators and construct novel 
fixed point algorithms based on these constants. 
In particular, we obtain a new version of the forward-backward 
algorithm with an extended relaxation range and iteration-dependent
step sizes.

Throughout the paper, $\HH$ is a real Hilbert space with scalar
product $\scal{\cdot}{\cdot}$ and associated norm $\|\cdot\|$.
We denote by $\Id$ the identity operator on $\HH$ and by $d_S$ the
distance function to a set $S\subset\HH$;
$\weakly$ and $\to$ denote, respectively, weak and strong 
convergence in $\HH$.

\section{Compositions and convex combinations of averaged operators}
\label{sec:2}

We first recall some characterizations of averaged operators (see 
\cite[Lemma~2.1]{Opti04} or \cite[Proposition~4.25]{Livre1}).

\begin{proposition}
\label{p:av1}
Let $D$ be a nonempty subset of $\HH$, let $T\colon D\to\HH$ be 
nonexpansive, and let $\alpha\in\zeroun$. 
Then the following are equivalent:
\begin{enumerate}
\item
\label{p:av1i}
$T$ is $\alpha$-averaged.
\item
\label{p:av1i'}
$(1-1/\alpha)\Id+(1/\alpha)T$ is nonexpansive.
\item
\label{p:av1ii}
$(\forall x\in D)(\forall y\in D)$ 
$\|Tx-Ty\|^2\leq\|x-y\|^2-\displaystyle{\frac{1-\alpha}{\alpha}}
\|(\Id-T)x-(\Id-T)y\|^2$.
\item
\label{p:av1iii}
$(\forall x\in D)(\forall y\in D)$ 
$\|Tx-Ty\|^2+(1-2\alpha)\|x-y\|^2\leq 2(1-\alpha)\scal{x-y}{Tx-Ty}$.
\end{enumerate}
\end{proposition}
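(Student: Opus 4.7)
The plan is to establish the chain (i) $\Leftrightarrow$ (i') $\Leftrightarrow$ (ii) $\Leftrightarrow$ (iii), since each link is a short algebraic manipulation built on one standard Hilbert space identity.

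For (i) $\Leftrightarrow$ (i'), I would observe that solving the equation $T=(1-\alpha)\Id+\alpha R$ uniquely yields $R=(1-1/\alpha)\Id+(1/\alpha)T$. So the existence of a nonexpansive $R$ satisfying the defining relation reduces to the specific operator in (i') being nonexpansive; there is really nothing to prove beyond noting this bijection.

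For (i') $\Leftrightarrow$ (ii), the idea is to apply the convex-combination identity
\begin{equation*}
\|(1-\alpha)u+\alpha v\|^2=(1-\alpha)\|u\|^2+\alpha\|v\|^2-\alpha(1-\alpha)\|u-v\|^2
\end{equation*}
with $u=x-y$ and $v=Rx-Ry$, where $R=(1-1/\alpha)\Id+(1/\alpha)T$. Since $T=(1-\alpha)\Id+\alpha R$, one has $Tx-Ty=(1-\alpha)(x-y)+\alpha(Rx-Ry)$, and since $\Id-R=(1/\alpha)(\Id-T)$, one has $(x-y)-(Rx-Ry)=(1/\alpha)((\Id-T)x-(\Id-T)y)$. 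Substituting these into the identity and rearranging, the inequality $\|Rx-Ry\|^2\leq\|x-y\|^2$ becomes precisely the inequality in (ii), with the coefficient $\alpha(1-\alpha)\cdot(1/\alpha^2)=(1-\alpha)/\alpha$ appearing naturally.

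For (ii) $\Leftrightarrow$ (iii), I would just expand $\|(\Id-T)x-(\Id-T)y\|^2=\|x-y\|^2-2\scal{x-y}{Tx-Ty}+\|Tx-Ty\|^2$, substitute into (ii), multiply through by $\alpha$, and collect terms; the $\alpha\|Tx-Ty\|^2$ and $(1-\alpha)\|Tx-Ty\|^2$ combine to give the left-hand side of (iii), while the $\|x-y\|^2$ contributions combine to $(1-2\alpha)\|x-y\|^2$. All steps here are reversible.

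Since everything reduces to algebraic rearrangement of one polarization-type identity, I do not anticipate any genuine obstacle; the only mild bookkeeping issue is making sure the factor $(1-\alpha)/\alpha$ in (ii) and the factor $(1-2\alpha)$ in (iii) are tracked correctly through the multiplications by $\alpha$, $1/\alpha$, and $1/\alpha^2$ that arise from the definition of $R$.
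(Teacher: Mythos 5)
Your proposal is correct, and it is essentially the standard argument: the paper itself offers no proof of this proposition, merely citing \cite[Lemma~2.1]{Opti04} and \cite[Proposition~4.25]{Livre1}, and the proof given there proceeds exactly as you do, by identifying $R=(1-1/\alpha)\Id+(1/\alpha)T$ and applying the convex-combination identity (which is \cite[Corollary~2.14]{Livre1}, the same identity the paper invokes later in the proof of Proposition~\ref{paQg507-03}). All of your algebraic links check out and are reversible since $\alpha\in\zeroun$, so nothing is missing.
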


The next result concerns the averagedness of a convex combination 
of averaged operators.

\begin{proposition}
\label{p:av2}
Let $D$ be a nonempty subset of $\HH$, let $(T_i)_{i\in I}$ be a 
finite family of nonexpansive operators from $D$ to $\HH$, let 
$(\alpha_i)_{i\in I}$ be a family in $\zeroun$, and let 
$(\omega_i)_{i\in I}$ be a family in $]0,1]$ such that 
$\sum_{i\in I}\omega_i=1$. Suppose that, for every $i\in I$, 
$T_i$ is $\alpha_i$-averaged, and set 
$T=\sum_{i\in I}\omega_i T_i$ and
$\alpha=\sum_{i\in I}\omega_i\alpha_i$. Then
$T$ is $\alpha$-averaged. 
\end{proposition}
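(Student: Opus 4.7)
The plan is to use the characterization \ref{p:av1iii} in Proposition~\ref{p:av1}, which is the most convenient form because it is linear in $\alpha$ and in quantities involving $T$, so it behaves well under taking convex combinations of operators and of constants. Since each $T_i$ is nonexpansive and $\omega_i\geq 0$ with $\sum_{i\in I}\omega_i=1$, $T$ is immediately nonexpansive; moreover $\alpha=\sum_i\omega_i\alpha_i$ lies in $\zeroun$ because each $\alpha_i$ does and $\sum_i\omega_i=1$. So the only real content is the averagedness inequality.

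The core computation I would carry out is the following. Fix $x,y\in D$. For each $i\in I$, applying \ref{p:av1iii} to $T_i$ gives
\begin{equation*}
\|T_ix-T_iy\|^2+(1-2\alpha_i)\|x-y\|^2\leq 2(1-\alpha_i)\scal{x-y}{T_ix-T_iy}.
\end{equation*}
Multiply this inequality by $\omega_i$ and sum over $i\in I$. On the right-hand side, linearity gives $2(1-\alpha)\scal{x-y}{Tx-Ty}$; the $\|x-y\|^2$ coefficient on the left becomes $1-2\alpha$; and the first term on the left becomes $\sum_{i\in I}\omega_i\|T_ix-T_iy\|^2$. Now invoke convexity of $\|\cdot\|^2$ (Jensen's inequality) to bound
\begin{equation*}
\|Tx-Ty\|^2=\Big\|\sum_{i\in I}\omega_i(T_ix-T_iy)\Big\|^2\leq\sum_{i\in I}\omega_i\|T_ix-T_iy\|^2,
\end{equation*}
and substitute. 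This yields exactly the inequality of \ref{p:av1iii} for $T$ with constant $\alpha$, whence $T$ is $\alpha$-averaged by the proposition.

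There is no real obstacle here: the argument hinges on the fact that characterization \ref{p:av1iii} is affine in $\alpha$ and in $T$ on both sides (apart from the $\|Tx-Ty\|^2$ term, which is handled by Jensen). Using \ref{p:av1i} directly would be awkward because one would have to exhibit a nonexpansive $R$ with $T=(1-\alpha)\Id+\alpha R$, requiring delicate juggling of the weights $\alpha_i/\alpha$ versus $\omega_i$; characterization \ref{p:av1ii} would also work but introduces the $(1-\alpha)/\alpha$ factor that does not interact cleanly with convex combinations. Characterization \ref{p:av1iii} bypasses both issues.
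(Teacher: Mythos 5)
There is a genuine gap at the central step of your computation. Characterization \ref{p:av1iii} of Proposition~\ref{p:av1} is \emph{not} affine in the pair $(\alpha,T)$: its right-hand side $2(1-\alpha)\scal{x-y}{Tx-Ty}$ is the \emph{product} of $1-\alpha$ with a quantity depending on $T$. When you multiply the inequality for $T_i$ by $\omega_i$ and sum over $i$, the right-hand side you actually obtain is
\begin{equation*}
2\sum_{i\in I}\omega_i(1-\alpha_i)\scal{x-y}{T_ix-T_iy},
\end{equation*}
an average of products, not the product of averages $2(1-\alpha)\scal{x-y}{Tx-Ty}$ that you claim ``linearity gives.'' To finish along your lines you would need, with $c_i=\scal{x-y}{T_ix-T_iy}$, the correlation inequality $\sum_{i\in I}\omega_i(1-\alpha_i)c_i\leq\big(\sum_{i\in I}\omega_i(1-\alpha_i)\big)\big(\sum_{i\in I}\omega_ic_i\big)$, which is not implied by the hypotheses and fails as an algebraic statement (take $I=\{1,2\}$, $\omega_1=\omega_2=1/2$, $\alpha_1=1/10$, $\alpha_2=9/10$, $c_1=1$, $c_2=0$: the left side is $0.45$, the right side $0.25$). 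The two steps you do carry out correctly (Jensen for $\|Tx-Ty\|^2$ and the exact averaging of the coefficient of $\|x-y\|^2$) do not repair this. Your argument is valid only when all the $\alpha_i$ coincide, which is not the statement.

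Ironically, the route you dismiss as awkward is the clean one and is the paper's actual proof: write $T_i=(1-\alpha_i)\Id+\alpha_iR_i$ with $R_i$ nonexpansive and set $R=\alpha^{-1}\sum_{i\in I}\omega_i\alpha_iR_i$. There is no delicate juggling of weights: the coefficients $\omega_i\alpha_i/\alpha$ are positive and sum to $1$, so $R$ is a convex combination of nonexpansive operators and hence nonexpansive, and $\sum_{i\in I}\omega_iT_i=(1-\alpha)\Id+\alpha R$ by direct expansion. Your instinct about characterization \ref{p:av1ii} is also worth noting: averaging it via the joint convexity of $(u,t)\mapsto\|u\|^2/t$ does yield an averagedness constant, but it is $\big[1+\big(\sum_{i\in I}\omega_i\alpha_i/(1-\alpha_i)\big)^{-1}\big]^{-1}$, which is in general strictly larger than $\alpha$; only the definitional argument delivers the constant claimed in the proposition.
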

\begin{proof}
For every $i\in I$, there exists a nonexpansive operator
$R_i\colon D\to\HH$ such that
$T_i=(1-\alpha_i)\Id+\alpha_i R_i$.
Now set $R=(1/\alpha)\sum_{i\in I}\omega_i\alpha_i R_i$. Then $R$ 
is nonexpansive and
\begin{equation}
\sum_{i\in I}\omega_iT_i=\sum_{i\in I}\omega_i(1-\alpha_i)\Id
+\sum_{i\in I}\omega_i\alpha_i R_i=(1-\alpha)\Id+\alpha R.
\end{equation}
We conclude that $T$ is $\alpha$-averaged.
\end{proof}

\begin{remark}
\label{raQg507}
In view of \cite[Corollary~2.2.17]{Cegi12}, Proposition~\ref{p:av2} 
is equivalent to \cite[Theorem~2.2.35]{Cegi12}, and it improves the 
averagedness constant of \cite[Lemma~2.2(ii)]{Opti04} which was 
$\alpha=\text{\rm max}_{i\in I}\alpha_i$. In 
the case of two operators, Proposition~\ref{p:av2} can be found in 
\cite[Theorem~3(a)]{Ogur02}. 
\end{remark}

Next, we turn our attention to compositions of averaged operators,
starting with the following result, which was obtained in 
\cite[Theorem~3(b)]{Ogur02} with a different proof.

\begin{proposition}
\label{paQg507-03}
Let $D$ be a nonempty subset of $\HH$, let 
$(\alpha_1,\alpha_2)\in\zeroun^2$, let $T_1\colon D\to D$ be 
$\alpha_1$-averaged, and let $T_2\colon D\to D$ be 
$\alpha_2$-averaged. Set
\begin{equation}
\label{XAre2e26a}
T=T_1 T_2\quad\text{and}\quad
\alpha=\frac{\alpha_1+\alpha_2-2\alpha_1\alpha_2}
{1-\alpha_1\alpha_2}.
\end{equation}
Then $\alpha\in\zeroun$ and  $T$ is $\alpha$-averaged.
\end{proposition}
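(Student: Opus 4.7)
The plan is to invoke characterization \ref{p:av1ii} of Proposition~\ref{p:av1}, which recasts $\alpha$-averagedness of an operator $S$ as the quadratic inequality $\|Sx-Sy\|^2\leq\|x-y\|^2-((1-\alpha)/\alpha)\|(\Id-S)x-(\Id-S)y\|^2$. Applying this to $T_2$ at the pair $(x,y)$ and to $T_1$ at the pair $(T_2x,T_2y)$, and then chaining the two estimates, will produce a bound
\begin{equation*}
\|Tx-Ty\|^2\leq\|x-y\|^2-\frac{1-\alpha_2}{\alpha_2}\|a\|^2
-\frac{1-\alpha_1}{\alpha_1}\|b\|^2,
\end{equation*}
where $a=(\Id-T_2)x-(\Id-T_2)y$ and $b=(\Id-T_1)T_2x-(\Id-T_1)T_2y$.

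First, I would verify that $\alpha\in\zeroun$ by rewriting
\begin{equation*}
\frac{1-\alpha}{\alpha}=\frac{(1-\alpha_1)(1-\alpha_2)}
{\alpha_1+\alpha_2-2\alpha_1\alpha_2},
\end{equation*}
since the numerator and denominator are both strictly positive when $(\alpha_1,\alpha_2)\in\zeroun^2$, while $1-\alpha>0$ follows from the factorization $(1-\alpha_1)(1-\alpha_2)>0$. The key algebraic observation is the telescoping identity $\Id-T_1T_2=(\Id-T_2)+(\Id-T_1)T_2$, which yields $(\Id-T)x-(\Id-T)y=a+b$.

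It then remains to establish the scalar-type inequality
\begin{equation*}
\frac{1-\alpha_2}{\alpha_2}\|a\|^2+\frac{1-\alpha_1}{\alpha_1}\|b\|^2
\geq\frac{1-\alpha}{\alpha}\|a+b\|^2.
\end{equation*}
Setting $p=(1-\alpha_1)/\alpha_1$ and $q=(1-\alpha_2)/\alpha_2$, a direct computation shows that $pq/(p+q)$ equals exactly the target coefficient $(1-\alpha)/\alpha$; hence the required bound is the vector identity $(p+q)(p\|b\|^2+q\|a\|^2)-pq\|a+b\|^2=\|qa-pb\|^2\geq 0$, which is a one-line expansion.

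The main obstacle is purely bookkeeping: recognizing the correct telescoping decomposition of $\Id-T_1T_2$ so that the two residuals produced by the two averagedness inequalities can be assembled into $\|a+b\|^2$, and then matching the arithmetic of $(1-\alpha)/\alpha$ with $pq/(p+q)$. Once these two identifications are made, the proof is mechanical and tight, explaining why the constant in \eqref{XAre2e26a} is the natural one.
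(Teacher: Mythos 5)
Your proposal is correct and follows essentially the same route as the paper: chain the two averagedness inequalities from Proposition~\ref{p:av1}\ref{p:av1ii}, use the telescoping identity $\Id-T_1T_2=(\Id-T_2)+(\Id-T_1)T_2$, and absorb the two residual terms into $\frac{pq}{p+q}\|a+b\|^2$ with $\frac{pq}{p+q}=\frac{1-\alpha}{\alpha}$. The only cosmetic difference is that you verify the key inequality $q\|a\|^2+p\|b\|^2\geq\frac{pq}{p+q}\|a+b\|^2$ by the direct expansion $\|qa-pb\|^2\geq 0$, whereas the paper obtains the same estimate by invoking the convex-combination norm identity of \cite[Corollary~2.14]{Livre1}.
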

\begin{proof}
Since $\alpha_1(1-\alpha_2)<(1-\alpha_2)$, we have
$\alpha_1+\alpha_2<1+\alpha_1\alpha_2$ and, therefore, 
$\alpha\in\zeroun$. Now let $x\in D$, let $y\in D$, and set
\begin{equation}
\label{Gt^reeb8718g}
\tau=\frac{1-\alpha_1}{\alpha_1}+
\frac{1-\alpha_2}{\alpha_2}. 
\end{equation}
It follows from Proposition~\ref{p:av1} that
\begin{align}
\label{e:TfgtF5e9-14a}
\|T_1 T_2x-T_1 T_2 y\|^2
&\leq\|T_2 x-T_2 y\|^2-\frac{1-\alpha_1}{\alpha_1}
\left\|(\Id-T_1)T_2 x-(\Id-T_1)T_2 y\right\|^2\nonumber\\
&\leq \|x-y\|^2-\frac{1-\alpha_2}{\alpha_2}
\left\|(\Id-T_2)x-(\Id-T_2)y\right\|^2\nonumber \\
&\quad\;-\frac{1-\alpha_1}{\alpha_1}
\left\|(\Id-T_1)T_2 x-(\Id-T_1)T_2 y\right\|^2.
\end{align}
Moreover, by \cite[Corollary~2.14]{Livre1}, we have
\begin{align}
\label{e:TfgtF5e9-14b}
&\hskip -8 mm 
\frac{1-\alpha_1}{\tau\alpha_1}\left\|(\Id-T_1)T_2
x-(\Id-T_1)T_2 y\right\|^2+\frac{1-\alpha_2}{\tau\alpha_2}
\left\|(\Id-T_2)x-(\Id-T_2)y\right\|^2\nonumber\\
&=\left\|\frac{1-\alpha_1}{\tau\alpha_1}
\big((\Id-T_1)T_2 x-(\Id-T_1)T_2 y\big)+
\frac{1-\alpha_2}{\tau\alpha_2}\big((\Id-T_2)x-(\Id-T_2)y\big)
\right\|^2\nonumber\\
&\quad\;+\frac{(1-\alpha_1)(1-\alpha_2)}{\tau^2\alpha_1\alpha_2}
\left\|(x-y)-\left(T_{1}T_2x-T_{1}T_2y\right)
\right\|^2\nonumber \\
&\geq\frac{(1-\alpha_1)(1-\alpha_2)}{\tau^2\alpha_1\alpha_2}
\left\|(\Id-T_1T_2)x-(\Id-T_1T_2)y\right\|^2. 
\end{align}
Combining \eqref{e:TfgtF5e9-14a}, \eqref{e:TfgtF5e9-14b}, 
and \eqref{XAre2e26a} yields
\begin{align}
\|T_1 T_2 x-T_1 T_2 y\|^2
&\leq\|x-y\|^2-\frac{(1-\alpha_1)(1-\alpha_2)}
{\tau\alpha_1\alpha_2}
\left\|\left(\Id-T_1T_2\right)x-\left(\Id-T_1T_2\right)y\right\|^2
\nonumber\\
&=\|x-y\|^2-\frac{1-\alpha_1-\alpha_2+\alpha_1\alpha_2}
{\alpha_1+\alpha_2-2\alpha_1\alpha_2}
\left\|\left(\Id-T_1T_2\right)x-\left(\Id-T_1T_2\right)y\right\|^2
\nonumber\\
&=\|x-y\|^2-\frac{1-\alpha}{\alpha}
\left\|\left(\Id-T_1T_2\right)x-\left(\Id-T_1T_2\right)y\right\|^2.
\end{align} 
In view of Proposition~\ref{p:av1}, we conclude that $T$ is
$\alpha$-averaged.
\end{proof}

In \cite[Theorem~2.2.37]{Cegi12}, the averagedness constant of 
\eqref{XAre2e26a} was written as 
\begin{equation}
\label{XAre2e26b}
\alpha=\dfrac{1}{1+\dfrac{1}{
\dfrac{\alpha_1}{1-\alpha_1}+\dfrac{\alpha_2}{1-\alpha_2}}}.
\end{equation}
By induction, it leads to the following result for the
composition of $m$ averaged operators, which was obtained
in \cite{Cegi12} (combine 
\cite[Theorem~2.2.42]{Cegi12} and \cite[Corollary~2.2.17]{Cegi12}).

\begin{proposition}
\label{paQg507-03'}
Let $D$ be a nonempty subset of $\HH$, let $m\geq 2$ be an integer, 
and set 
\begin{equation}
\label{hhghwedw710f27}
\phi\colon\zeroun^m\to\zeroun\colon
(\alpha_1,\ldots,\alpha_m)\mapsto\dfrac{1}{1+\dfrac{1}{\Sum_{i=1}^m
\dfrac{\alpha_i}{1-\alpha_i}}}.
\end{equation}
For every $i\in\{1,\ldots,m\}$, let 
$\alpha_i\in\zeroun$ and let $T_i\colon D\to D$ be 
$\alpha_i$-averaged. Set 
\begin{equation}
\label{XAre2e25}
T=T_1\cdots T_m\quad\text{and}\quad
\alpha=\phi(\alpha_1,\ldots,\alpha_m).
\end{equation}
Then $T$ is $\alpha$-averaged.
\end{proposition}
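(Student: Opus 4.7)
The plan is a direct induction on $m\geq 2$. The base case $m=2$ is nothing but Proposition~\ref{paQg507-03} rewritten in the equivalent form \eqref{XAre2e26b}, which is already observed in the paragraph preceding the statement.

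For the inductive step, assume the conclusion for some $m-1\geq 2$. Given $T_1,\ldots,T_m$ as in the statement, I would set $S=T_2\cdots T_m$. Since each $T_i$ maps $D$ into $D$, so does $S$, and by the induction hypothesis $S$ is $\beta$-averaged with $\beta=\phi(\alpha_2,\ldots,\alpha_m)\in\zeroun$. Then $T=T_1 S$ is the composition of two averaged self-maps of $D$, so Proposition~\ref{paQg507-03}, rewritten in the form \eqref{XAre2e26b}, yields that $T$ is $\gamma$-averaged with
\begin{equation}
\gamma=\dfrac{1}{1+\dfrac{1}{\dfrac{\alpha_1}{1-\alpha_1}+\dfrac{\beta}{1-\beta}}}.
\end{equation}

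The only computation to verify is that $\gamma=\phi(\alpha_1,\ldots,\alpha_m)$, i.e.\ that $\phi$ satisfies the recursion $\phi(\alpha_1,\ldots,\alpha_m)=\phi\bigl(\alpha_1,\phi(\alpha_2,\ldots,\alpha_m)\bigr)$. Writing $s=\sum_{i=2}^{m}\alpha_i/(1-\alpha_i)$, \eqref{hhghwedw710f27} gives $\beta=s/(s+1)$, hence $1-\beta=1/(s+1)$ and $\beta/(1-\beta)=s$. Therefore
\begin{equation}
\dfrac{\alpha_1}{1-\alpha_1}+\dfrac{\beta}{1-\beta}=\Sum_{i=1}^{m}\dfrac{\alpha_i}{1-\alpha_i},
\end{equation}
which upon substitution gives $\gamma=\phi(\alpha_1,\ldots,\alpha_m)=\alpha$.

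There is no genuine obstacle here: the averagedness constants provided by Proposition~\ref{paQg507-03} are preserved along the induction precisely because the function $\phi$ of \eqref{hhghwedw710f27} is tailored to the composition law for the quantities $\alpha_i/(1-\alpha_i)$. The only points that need a moment of attention are checking that $S$ maps $D$ into $D$ (so that Proposition~\ref{paQg507-03} applies to the pair $(T_1,S)$) and that $\beta\in\zeroun$ (guaranteed by the induction hypothesis), after which the verification reduces to the one-line algebraic identity above.
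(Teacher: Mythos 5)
Your proof is correct and follows essentially the same route as the paper's: an induction that reduces to the two-operator case of Proposition~\ref{paQg507-03} via the form \eqref{XAre2e26b}, together with the observation that the quantities $\alpha_i/(1-\alpha_i)$ add under composition. The only (immaterial) difference is that you peel off $T_1$ from the left and apply the induction hypothesis to the suffix $T_2\cdots T_m$, whereas the paper builds up the prefix $T_1\cdots T_k$ and composes with $T_{k+1}$ on the right.
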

\begin{proof}
We proceed by induction on $k\in\{2,\ldots,m\}$. To this end, let
us set $(\forall k\in\{2,\ldots,m\})$
$\beta_k=[1+[\sum_{i=1}^k\alpha_i/(1-\alpha_i)]^{-1}]^{-1}$.
By Proposition~\ref{paQg507-03} and \eqref{XAre2e26b},
the claim is true for $k=2$.
Now assume that, for some $k\in\{2,\ldots,m-1\}$,
$T_1\cdots T_{k}$ is $\beta_k$-averaged. Then we deduce from 
Proposition~\ref{paQg507-03} and \eqref{XAre2e26b} 
that the averagedness constant of $(T_1\cdots T_{k})T_{k+1}$ is 
\begin{equation}
\label{XAre2e26c}
\dfrac{1}{1+\dfrac{1}{
\dfrac{1}{\beta_k^{-1}-1}+\dfrac{\alpha_{k+1}}{1-\alpha_{k+1}}}}
=\dfrac{1}{1+\dfrac{1}{\bigg(\Sum_{i=1}^k\dfrac{\alpha_i}
{1-\alpha_i}
\bigg)+\dfrac{\alpha_{k+1}}{1-\alpha_{k+1}}}}=\beta_{k+1},
\end{equation}
which concludes the induction argument.
\end{proof}

The following result provides alternative expressions for the
averagedness constant $\alpha$ of \eqref{XAre2e25}.

\begin{proposition}
\label{paQg507-03''}
Let $m\geq 2$ be an integer, let $\phi$ be as in 
\eqref{hhghwedw710f27}, 
let $(\alpha_i)_{1\leq i\leq m}\in\zeroun^m$,
and let $(\sigma_j)_{1\leq j\leq m}$ the elementary symmetric 
polynomials in the variables $(\alpha_i)_{1\leq i\leq m}$, i.e.,
\begin{equation}
\label{Bv54Ewe309-27b}
(\forall j\in\{1,\ldots,m\})\quad
\sigma_j=\sum_{1\leq i_1<\cdots<i_j\leq m}
\prod_{l=1}^j\alpha_{i_l}. 
\end{equation}
Then the following hold:
\begin{enumerate}
\item 
\label{paQg507-03''ii} 
$\phi(\alpha_1,\ldots,\alpha_m)=\big[
{\sum_{l=1}^{\pinf}\sum_{i=1}^{m}\alpha_{i}^{l}}\big]\big/\big[
{1+\sum_{l=1}^{\pinf}\sum_{i=1}^{m}\alpha_{i}^{l}}\big]$.
\item
\label{paQg507-03''iii}
$\phi(\alpha_1,\ldots,\alpha_m)=
\big[{\sum_{j=1}^m(-1)^{j-1}j\sigma_j}\big]\big/
\big[{1+\sum_{j=2}^{m} (-1)^{j-1}(j-1)\sigma_j}\big]$.
\item 
\label{paQg507-03''i} 
$\phi(\alpha_1,\ldots,\alpha_m)>\text{\rm max}_{1\leq i\leq m}
\alpha_i$.
\end{enumerate}
\end{proposition}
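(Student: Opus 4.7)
The plan is to reduce everything to the single quantity $S=\sum_{i=1}^{m}\alpha_{i}/(1-\alpha_{i})$, since by \eqref{hhghwedw710f27} we have $\phi(\alpha_{1},\ldots,\alpha_{m})=S/(1+S)$. The three items then correspond to three different ways of manipulating $S$.

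For \ref{paQg507-03''ii}, I would observe that, since each $\alpha_{i}\in\zeroun$, the geometric series identity $\sum_{l=1}^{\pinf}\alpha_{i}^{l}=\alpha_{i}/(1-\alpha_{i})$ holds. Summing over $i$ gives $S=\sum_{l=1}^{\pinf}\sum_{i=1}^{m}\alpha_{i}^{l}$, and substituting into $S/(1+S)$ yields the stated formula.

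For \ref{paQg507-03''iii}, the key step is to recognize $S$ as a logarithmic derivative. Consider the polynomial $Q(t)=\prod_{i=1}^{m}(1-t\alpha_{i})=\sum_{j=0}^{m}(-1)^{j}\sigma_{j}t^{j}$, where $\sigma_{0}=1$. Then $-Q'(t)/Q(t)=\sum_{i=1}^{m}\alpha_{i}/(1-t\alpha_{i})$, so evaluating at $t=1$ gives
\begin{equation}
S=\frac{-Q'(1)}{Q(1)}=\frac{\sum_{j=1}^{m}(-1)^{j-1}j\sigma_{j}}{\sum_{j=0}^{m}(-1)^{j}\sigma_{j}}=\frac{N}{D},
\end{equation}
with $N$ and $D$ the numerator and denominator above. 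Then $\phi=S/(1+S)=N/(N+D)$, and a routine regrouping of coefficients of $\sigma_{j}$ yields $N+D=1+\sum_{j=2}^{m}(-1)^{j-1}(j-1)\sigma_{j}$ (the $j=1$ contribution cancels), which is the claimed denominator.

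For \ref{paQg507-03''i}, the argument is essentially a one-liner once \ref{paQg507-03''ii} is in hand: for each $i\in\{1,\ldots,m\}$, since all terms in $S$ are strictly positive and $m\geq 2$, we have $S>\alpha_{i}/(1-\alpha_{i})$. Because the scalar map $x\mapsto x/(1+x)$ is strictly increasing on $\RP$,
\begin{equation}
\phi(\alpha_{1},\ldots,\alpha_{m})=\frac{S}{1+S}>\frac{\alpha_{i}/(1-\alpha_{i})}{1+\alpha_{i}/(1-\alpha_{i})}=\alpha_{i},
\end{equation}
and taking the maximum over $i$ concludes. The only genuinely delicate step is the bookkeeping in \ref{paQg507-03''iii} to match $N+D$ with the stated denominator; parts \ref{paQg507-03''ii} and \ref{paQg507-03''i} are short consequences of the representation $\phi=S/(1+S)$.
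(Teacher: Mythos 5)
Your proposal is correct, and for items \ref{paQg507-03''iii} and \ref{paQg507-03''i} it takes a genuinely different and shorter route than the paper. Item \ref{paQg507-03''ii} is handled exactly as in the paper, via the geometric series $\alpha_i/(1-\alpha_i)=\sum_{l=1}^{\pinf}\alpha_i^l$. For item \ref{paQg507-03''iii}, the paper does not argue directly from the closed form of $\phi$: it re-expresses $\phi(\alpha_1,\ldots,\alpha_m)$ through the two-operator recursion $\beta_{k+1}=(\alpha_{k+1}+\beta_k-2\alpha_{k+1}\beta_k)/(1-\alpha_{k+1}\beta_k)$ and runs a page-long induction on partial elementary symmetric polynomials $s_j(k)$ satisfying $s_{j+1}(k+1)=s_{j+1}(k)+\alpha_{k+1}s_j(k)$. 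Your logarithmic-derivative identity $-Q'(t)/Q(t)=\sum_i\alpha_i/(1-t\alpha_i)$ for $Q(t)=\prod_i(1-t\alpha_i)=\sum_j(-1)^j\sigma_jt^j$, evaluated at $t=1$, produces $S=N/D$ in two lines, and the regrouping $N+D=1+\sum_{j=2}^m(-1)^{j-1}(j-1)\sigma_j$ checks out (the coefficient of $\sigma_j$ is $(-1)^{j-1}j+(-1)^j=(-1)^{j-1}(j-1)$, which vanishes at $j=1$); the only point worth making explicit is that $D=Q(1)=\prod_i(1-\alpha_i)>0$, so the division and the passage to $N/(N+D)$ are legitimate. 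For item \ref{paQg507-03''i}, the paper again reduces to the case $m=2$, computes $\beta_2-\alpha_1=\alpha_2(1-\alpha_1)^2/(1-\alpha_1\alpha_2)>0$, and invokes induction through the recursion, whereas your observation that $S>\alpha_i/(1-\alpha_i)$ term by term and that $x\mapsto x/(1+x)$ is strictly increasing on $\RP$ settles all $m$ at once. What the paper's heavier machinery buys is the recursion \eqref{Gt^reeb8718f} itself, which it reuses elsewhere and which ties $\phi$ to the iterated composition constants; what your approach buys is brevity and transparency, since the proposition as stated is purely an identity about the function $\phi$ of \eqref{hhghwedw710f27} and needs no reference to operator compositions.
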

\begin{proof}
\ref{paQg507-03''ii}:
Indeed, \eqref{hhghwedw710f27} yields
\begin{equation}
\phi(\alpha_1,\ldots,\alpha_m)=
\frac{\Sum_{i=1}^{m}\left(\frac{1}{1-\alpha_i}-1\right)}
{1+\Sum_{i=1}^{m}\left(\frac{1}{1-\alpha_i}-1\right)},\quad
\text{where}\quad(\forall i\in\{1,\ldots,m\})\quad
\frac{1}{1-\alpha_i}-1=\sum_{l=1}^{\pinf}\alpha_i^{l}. 
\end{equation}

\ref{paQg507-03''iii}:
Using the inductive argument of the proof of Proposition
\ref{paQg507-03'} and \eqref{XAre2e26b}, we observe
that $\phi(\alpha_1,\ldots,\alpha_m)$ can be defined 
via the recursion
\begin{equation}
\label{Gt^reeb8718f}
\begin{array}{l}
\left\lfloor
\begin{array}{l}
\beta_1=\alpha_1\\
\text{for}\;k=1,\ldots,m-1\\
\begin{array}{l}
\left\lfloor
\begin{array}{l}
\beta_{k+1}=\Frac{\alpha_{k+1}+\beta_k-2\alpha_{k+1}\beta_k}
{1-\alpha_{k+1}\beta_k}
\end{array}
\right.\\[2mm]
\end{array}\\
\phi(\alpha_1,\ldots,\alpha_m)=\beta_m.
\end{array}
\right.
\end{array}
\end{equation} 
Set
\begin{equation}
\label{e:BnBn6Tf3419b}
(\forall k\in\{1,\ldots,m \})\quad
\begin{cases}
s_{0}(k)=1\\
s_{k+1}(k)=0\\
(\forall j\in\{1,\ldots, k\})\quad
s_{j}(k)=\sum_{1\leq i_1<\cdots<i_j\leq k}
\prod_{l=1}^j\alpha_{i_l}.
\end{cases}
\end{equation} 
We have $(\forall j\in\{1,\ldots, m\})$ $\sigma_j=s_j(m)$.
Furthermore, 
\begin{equation}
\label{e:BnBn6Tf3418b}
(\forall k\in\{1,\ldots,m-1\})(\forall j\in\{0,\ldots,k\})\quad
s_{j+1}(k+1)=s_{j+1}(k)+\alpha_{k+1}s_{j}(k).
\end{equation} 
Let us show by induction that, for every $k\in \{2,\ldots, m\}$,  
\begin{equation}
\label{e:n6Tk13420a}
\beta_{k}=\frac{\sum_{j=1}^k(-1)^{j-1}js_j(k)}{1+\sum_{j=2}^{k} 
(-1)^{j-1}(j-1)s_{j}(k)}.
\end{equation} 
Since $s_{1}(2)=\alpha_1+\alpha_2$ and
$s_2(2)=\alpha_1\alpha_2$, \eqref{Gt^reeb8718f} yields
\begin{equation}
\beta_2=\frac{\alpha_2+\beta_1-2\alpha_2\beta_1}{1-\alpha_2 \beta_1}
=\frac{\alpha_2+\alpha_1-2\alpha_2\alpha_1}{1-\alpha_2\alpha_1}
=\frac{s_{1}(2)-2s_2(2)}{1-s_2(2)}. 
\end{equation}
This establishes \eqref{e:n6Tk13420a} for $k=2$. 
Now suppose that \eqref{e:n6Tk13420a} holds for some 
$k\in \{2,\ldots, m-1\}$. We derive from 
\eqref{e:BnBn6Tf3419b} and \eqref{e:BnBn6Tf3418b} that
\begin{align}
\label{e:BnBn6Tf3418d}
&\hskip -6mm
\big(\alpha_{k+1}+\beta_k-2\alpha_{k+1}\beta_k\big)
\Bigg(1+\sum_{j=2}^{k}(-1)^{j-1}(j-1)s_{j}(k)\Bigg)\nonumber\\
&=\alpha_{k+1}\bigg(1+\sum_{j=2}^{k}(-1)^{j-1}(j-1)s_{j}(k)\bigg)
\nonumber\\
&\quad\;+\sum_{j=1}^{k}(-1)^{j-1}js_{j}(k)-2\alpha_{k+1}
\bigg(s_{1}(k)+\sum_{j=2}^{k}(-1)^{j-1}js_{j}(k)\bigg)\nonumber\\
&=\alpha_{k+1}+s_{1}(k)+
\sum_{j=2}^{k}(-1)^{j-1}js_{j}(k)\nonumber\\
&\quad\;+\alpha_{k+1}\bigg(\sum_{j=2}^{k}(-1)^{j-1}(j-1)s_{j}(k)
-2\sum_{j=2}^{k}(-1)^{j-1}js_{j}(k)-2s_{1}(k)\bigg)
\nonumber\\
&=s_{1}(k+1)+\sum_{j=2}^{k}(-1)^{j-1}js_{j}(k)
-\alpha_{k+1}\bigg(\sum_{j=2}^{k}(-1)^{j-1}(j+1)s_{j}(k)
+2s_{1}(k)\bigg)\nonumber\\
&=s_{1}(k+1)+\sum_{j=2}^{k}(-1)^{j-1}js_{j}(k)
-\alpha_{k+1}\sum_{j=1}^{k}(-1)^{j-1}(j+1)s_{j}(k)\nonumber\\
&=s_{1}(k+1)+\sum_{j=2}^{k}(-1)^{j-1}js_{j}(k)
-\alpha_{k+1}\sum_{j=1}^{k-1}(-1)^{j-1}(j+1)s_{j}(k)\nonumber\\
&\quad\; -(-1)^{k-1}(k+1)\alpha_{k+1}s_{k}(k)\nonumber\\
&=s_{1}(k+1)+\sum_{j=2}^{k}(-1)^{j-1}js_{j}(k)
+\alpha_{k+1}\sum_{j=2}^{k}(-1)^{j-1}js_{j-1}(k)
+(-1)^{k}(k+1)s_{k+1}(k+1)\nonumber\\
&=s_{1}(k+1)+\sum_{j=2}^{k}(-1)^{j-1}j\big(s_{j}(k)
+\alpha_{k+1}s_{j-1}(k)\big)
+(-1)^{k}(k+1)s_{k+1}(k+1)\nonumber\\
&=s_{1}(k+1)+\sum_{j=2}^{k}(-1)^{j-1}js_{j}(k+1)
+(-1)^{k}(k+1)s_{k+1}(k+1)\nonumber\\
&=\sum_{j=1}^{k+1}(-1)^{j-1}js_{j}(k+1)
\end{align}
and that
\begin{align}
\label{e:BnBn6Tf3418c}
&\hskip -7mm
\big(1-\alpha_{k+1}\beta_k\big)
\Bigg(1+\sum_{j=2}^{k}(-1)^{j-1}(j-1)s_{j}(k)\Bigg)\nonumber\\
&=\Bigg(1-\frac{\sum_{j=1}^{k}(-1)^{j-1}js_{j}(k)\alpha_{k+1}}
{1+\sum_{j=2}^{k}(-1)^{j-1}(j-1)s_{j}(k)}\Bigg)\Bigg(
1+\sum_{j=2}^{k}(-1)^{j-1}(j-1)s_{j}(k)\Bigg)\nonumber \\
&=1+\sum_{j=2}^{k}(-1)^{j-1}(j-1)s_{j}(k)-\sum_{j=1}^{k}(-1)^{j-1}j
s_{j}(k)\alpha_{k+1}\nonumber \\
&=1+\sum_{j=2}^{k}(-1)^{j-1}(j-1)s_{j}(k)+\sum_{j=1}^{k}(-1)^{j-1}j
\big(s_{j+1}(k)-s_{j+1}(k+1)\big)\nonumber\\
&=1+\sum_{j=2}^{k}(-1)^{j-1}(j-1)s_{j}(k)-
\sum_{j=2}^{k+1}(-1)^{j-1}(j-1)s_{j}(k)+\sum_{j=2}^{k+1}
(-1)^{j-1}(j-1)s_{j}(k+1)\nonumber\\
&=1+\sum_{j=2}^{k+1}(-1)^{j-1}(j-1)s_{j}(k+1). 
\end{align}
Taking the ratio of \eqref{e:BnBn6Tf3418d} and 
\eqref{e:BnBn6Tf3418c} yields
\begin{equation}
\label{ewope36}
\beta_{k+1}=\Frac{\alpha_{k+1}+\beta_k-2\alpha_{k+1}\beta_k}
{1-\alpha_{k+1}\beta_k}=\frac{\sum_{j=1}^{k+1}(-1)^{j-1}js_j(k+1)}
{1+\sum_{j=2}^{k+1}(-1)^{j-1}(j-1)s_{j}(k+1)}.
\end{equation}
This shows that \eqref{e:n6Tk13420a} holds 
for every $k\in \{2,\dots,m\}$. 

\ref{paQg507-03''i}: 
We need to consider only the case when $m=2$ since the general case
will follow from \eqref{Gt^reeb8718f} by induction. 
We derive from \eqref{Gt^reeb8718f} that
\begin{equation}
\label{e:m=2}
\beta_2=\frac{\alpha_1+\alpha_2-2\alpha_1\alpha_2}
{1-\alpha_1\alpha_2}. 
\end{equation}
Since 
$\beta_2-\alpha_1=\alpha_2(1-\alpha_1)^{2}/(1-\alpha_1\alpha_2)>0$ 
and
$\beta_2-\alpha_2=\alpha_1(1-\alpha_2)^{2}/(1-\alpha_1\alpha_2)>0$,
we have $\beta_2>\text{max}\{\alpha_1,\alpha_2\}>0$.  
\end{proof}

\begin{remark}
\label{KWghy5-f-09}
Let us compare the averagedness constant of 
Proposition~\ref{paQg507-03'} with alternative ones. Set 
\begin{equation}
\label{e:opti2004}
\widetilde{\phi}\colon\zeroun^m\to\zeroun\colon
(\alpha_1,\ldots,\alpha_m)\mapsto
\Frac{m\,\text{max}\{\alpha_1,\ldots,\alpha_m\}} 
{(m-1)\text{max}\{\alpha_1,\ldots,\alpha_m\}+1},
\end{equation}
and let $(\alpha_i)_{1\leq i\leq m}\in\zeroun^m$.
\begin{enumerate}
\item 
\label{KWghy5-f-09i}
The averagedness constant of Proposition~\ref{paQg507-03'} 
is sharper than that of \cite[Lemma~2.2(iii)]{Opti04}, namely 
\begin{equation}
\label{e:opti04}
\phi(\alpha_1,\ldots,\alpha_m)\leq 
\widetilde{\phi}(\alpha_1,\ldots,\alpha_m).
\end{equation}
\item 
\label{r:iuiiipii}
${\phi}(\alpha_1,\ldots,\alpha_m)=
\widetilde{\phi}(\alpha_1,\ldots,\alpha_m)$ if
$\alpha_1=\cdots=\alpha_m$ and, in particular, if all the 
operators are firmly nonexpansive, i.e., 
$\alpha_1=\cdots=\alpha_m=1/2$.
\item 
\label{KWghy5-f-09iii}
If $m=2$, the averagedness constant of 
Proposition~\ref{paQg507-03'} is strictly sharper than 
that of \cite[Lemma~3.2]{Weny08}, namely (see also 
\cite[Remark~2.2.38]{Cegi12})
\begin{equation}
\label{e:chin08}
\phi(\alpha_1,\alpha_2)<\widehat{\phi}(\alpha_1,\alpha_2),
\quad\text{where}\quad
\widehat{\phi}(\alpha_1,\alpha_2)=\alpha_1+\alpha_2-\alpha_1\alpha_2.
\end{equation}
In addition, $\phi(\alpha_1,\alpha_1)=
\widetilde{\phi}(\alpha_1,\alpha_1)<
\widehat{\phi}(\alpha_1,\alpha_1)$ while, for 
$\alpha_1={3}/{4}$ and $\alpha_2={1}/{8}$, 
$\widehat{\phi}(\alpha_1,\alpha_2)={25}/{32}<
{6}/{7}=\widetilde{\phi}(\alpha_1,\alpha_2)$, which shows that
$\widetilde{\phi}$ and $\widehat{\phi}$ cannot be compared in
general.
\end{enumerate}
\end{remark}
\begin{proof}
\ref{KWghy5-f-09i}: 
Combine \cite[Theorem~2.2.42]{Cegi12}, 
and \cite[Corollary~2.2.17]{Cegi12}.

\ref{r:iuiiipii}: 
Set $\beta_1=\delta_1=\alpha_1$ and
\begin{equation}
\label{e:BnBn6Tf3421b}
(\forall k\in\{2,\ldots,m\})\quad
\begin{cases}
\beta_k=\dfrac{1}{1+\dfrac{1}{\Sum_{i=1}^k
\dfrac{\alpha_i}{1-\alpha_i}}},\\[18mm]
\delta_k=\Frac{k\,\text{max}\{\alpha_1,\ldots,\alpha_k\}} 
{(k-1)\text{max}\{\alpha_1,\ldots,\alpha_k\}+1}.
\end{cases}
\end{equation} 
Then \eqref{e:BnBn6Tf3421b} yields
\begin{equation}
\label{Bv54Ewe36-9a}
(\forall k\in\{1,\ldots,m\})\quad
\delta_k=\frac{k\alpha_1}{(k-1)\alpha_1+1}. 
\end{equation} 
Let us show by induction that 
\begin{equation}
\label{jIv5989e36-9b}
(\forall k\in\{1,\ldots,m\})\quad \beta_k=\delta_k.
\end{equation} 
We have $\beta_1=\delta_1=\alpha_1$. Next, suppose that, for some 
$k\in\{1,\ldots,m-1\}$, $\beta_k=\delta_k$. 
Then $\alpha_{k+1}=\alpha_1$, while
\eqref{XAre2e26c} and \eqref{Bv54Ewe36-9a} yield 
\begin{equation}
\label{e:case4} 
\beta_{k+1}=\dfrac{1}{1+\dfrac{1}{
\dfrac{1}{\beta_k^{-1}-1}+\dfrac{\alpha_1}{1-\alpha_1}}}
=\dfrac{1}{1+\dfrac{1}{
\dfrac{1}{\delta_k^{-1}-1}+\dfrac{\alpha_1}{1-\alpha_1}}}
=\frac{(k+1)\alpha_1}{k\alpha_1+1}=\delta_{k+1}. 
\end{equation}
This establishes \eqref{jIv5989e36-9b}.

\ref{KWghy5-f-09iii}: 
This inequality was already obtained in 
\cite[Remark~2.2.38]{Cegi12}. It follows from the fact that
\begin{equation}
\widehat{\phi}(\alpha_1,\alpha_2)-
\phi(\alpha_1,\alpha_2)=\frac{\alpha_1\alpha_2 
(1-\alpha_1)(1-\alpha_2)}{1-\alpha_1\alpha_2}>0.
\end{equation}
The remaining assertions are easily verified.
\end{proof}

\section{Algorithms}
\label{sec:3}

We present applications of the bounds discussed in
Section~\ref{sec:2} to fixed point algorithms. Henceforth, we 
denote the set of fixed points of an operator $T\colon\HH\to\HH$ by
$\Fix T$.

As a direct application of Proposition~\ref{p:av2} and 
Proposition~\ref{paQg507-03'}, we first consider so-called 
``string-averaging'' iterations, which involve a mix of 
compositions and convex combinations of operators. In the case of
projection operators, such iterations go back to \cite{Cens01}.

\begin{proposition}
\label{p:nZZy418}
Let $(T_i)_{i\in I}$ be a finite family of nonexpansive operators 
from $\HH$ to $\HH$ such that $\bigcap_{i\in I}\Fix T_i\neq\emp$, 
and let $(\alpha_i)_{i\in I}$ be real numbers in 
$\left]0,1\right[$ such that, for every $i\in I$, $T_i$ is 
$\alpha_i$-averaged. Let $p$ be a strictly positive integer, 
for every $k\in\{1,\ldots,p\}$ let $m_k$ be a strictly positive 
integer and let $\omega_k\in\left]0,1\right]$, and suppose that 
$\ii\colon\menge{(k,l)}{k\in\{1,\ldots,p\},\,l\in
\{1,\ldots,m_k\}}\to I$
is surjective and that $\sum_{k=1}^{p}\omega_k=1$. Define
\begin{equation}
\label{e:nZZy418}
T=\sum_{k=1}^{p}\omega_kT_{\ii(k,1)}\cdots T_{\ii(k,m_k)}.
\end{equation}
Then the following hold: 
\begin{enumerate}
\item 
\label{p:nZZy418i} 
Set
\begin{equation}
\label{e:nZZy418b}
\alpha=\sum_{k=1}^{p}\dfrac{\omega_k}{1+\dfrac{1}
{\Sum_{i=1}^{m_k}\dfrac{\alpha_{\ii(k,i)}}{1-\alpha_{\ii(k,i)}}}}
\end{equation}
Then $T$ is $\alpha$-averaged and 
$\Fix T=\bigcap_{i\in I}\Fix T_i$. 
\item 
\label{p:nZZy418ii}
Let $(\lambda_n)_{n\in \NN}$ be a sequence in
$\left]0,1/\alpha\right[$ such that 
$\sum_{n\in\NN}\lambda_n(1/\alpha-\lambda_n)=\pinf$.
Furthermore, let $x_0\in \HH$ and set 
\begin{equation}
(\forall n\in \NN)\quad
x_{n+1}=x_n+\lambda_n\big(Tx_n-x_n\big). 
\end{equation}
Then $(x_n)_{n\in \NN}$ converges weakly to a point in
$\bigcap_{i\in I}\Fix T_i$.
\end{enumerate}
\end{proposition}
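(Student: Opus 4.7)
My plan is to deduce \ref{p:nZZy418i} directly from the two averagedness results of Section~\ref{sec:2} and to derive \ref{p:nZZy418ii} from \ref{p:nZZy418i} via the Krasnoselskii--Mann theorem for averaged operators.

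For \ref{p:nZZy418i}, I will set $S_k=T_{\ii(k,1)}\cdots T_{\ii(k,m_k)}$ for each $k\in\{1,\ldots,p\}$. Proposition~\ref{paQg507-03'} then yields that every $S_k$ is $\beta_k$-averaged with $\beta_k=[1+[\sum_{i=1}^{m_k}\alpha_{\ii(k,i)}/(1-\alpha_{\ii(k,i)})]^{-1}]^{-1}$, and since $T=\sum_{k=1}^{p}\omega_k S_k$ with $\omega_k\in\left]0,1\right]$ and $\sum_k\omega_k=1$, Proposition~\ref{p:av2} delivers that $T$ is $\alpha$-averaged with $\alpha=\sum_k\omega_k\beta_k$, which is precisely \eqref{e:nZZy418b}. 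For the fixed-point identity, I will invoke two classical facts for nonexpansive operators in a Hilbert space that share a common fixed point, both available in \cite{Livre1}: (a) the fixed-point set of a convex combination of nonexpansive operators with a common fixed point equals the intersection of their fixed-point sets, and (b) the fixed-point set of a composition of averaged operators with a common fixed point equals the intersection of the individual fixed-point sets. Applying (b) inside each string gives $\Fix S_k=\bigcap_j\Fix T_{\ii(k,j)}$, and (a) then yields $\Fix T=\bigcap_k\Fix S_k=\bigcap_k\bigcap_j\Fix T_{\ii(k,j)}=\bigcap_{i\in I}\Fix T_i$, the last equality following from the surjectivity of $\ii$.

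For \ref{p:nZZy418ii}, using \ref{p:nZZy418i}, I will write $T=(1-\alpha)\Id+\alpha R$ with $R\colon\HH\to\HH$ nonexpansive and $\Fix R=\Fix T=\bigcap_{i\in I}\Fix T_i\neq\emp$. Setting $\mu_n=\alpha\lambda_n$, the recursion becomes $x_{n+1}=(1-\mu_n)x_n+\mu_n Rx_n$, where $(\mu_n)_{n\in\NN}$ lies in $\zeroun$ and satisfies $\sum_{n\in\NN}\mu_n(1-\mu_n)=\alpha^{2}\sum_{n\in\NN}\lambda_n(1/\alpha-\lambda_n)=\pinf$. The Krasnoselskii--Mann theorem (see, e.g., \cite{Livre1}) then guarantees that $(x_n)_{n\in\NN}$ converges weakly to a point in $\Fix R=\bigcap_{i\in I}\Fix T_i$. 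The averagedness computations are routine given the tools of Section~\ref{sec:2}; the only delicate step, where the nonemptiness assumption $\bigcap_{i\in I}\Fix T_i\neq\emp$ is essential, is the fixed-point identity, since one must propagate fixedness of $T$ through each string back to the individual operators $T_i$.
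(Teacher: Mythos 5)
Your proof is correct and follows essentially the same route as the paper: averagedness via Proposition~\ref{paQg507-03'} applied to each string followed by Proposition~\ref{p:av2} for the convex combination, the fixed-point identity via the standard results on compositions and convex combinations of operators sharing a common fixed point (\cite[Proposition~4.34 and Corollary~4.37]{Livre1}), and weak convergence via the Krasnosel'ski\u{\i}--Mann theorem, which is precisely the content of the cited \cite[Proposition~5.15(iii)]{Livre1} that you simply unpack.
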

\begin{proof}
\ref{p:nZZy418i}:
The $\alpha$-averagedness of $T$ follows from 
Propositions~\ref{p:av2} and ~\ref{paQg507-03'}. 
The remaining assertions follow from 
\cite[Proposition~4.34 and Corollary~4.37]{Livre1}. 
 
\ref{p:nZZy418ii}: This follows from 
\ref{p:nZZy418i} and 
\cite[Proposition~5.15(iii)]{Livre1}.
\end{proof}

\begin{remark}
Proposition~\ref{p:nZZy418} improves upon 
\cite[Corollary~5.18]{Livre1}, where the averagedness constant
$\alpha$ of \eqref{e:nZZy418b} was replaced by 
\begin{equation}
\label{e:2009-03-21a}
\alpha'=\underset{1\leq k\leq p}{\text{\rm max}}\:
\rho_k,\quad\text{with}\quad
(\forall k\in\{1,\ldots,p\})\quad
\rho_k=\frac{m_k}{m_k-1+\displaystyle{\frac{1}{\text{\rm max}
\big\{\alpha_{\ii(k,1)},\ldots,\alpha_{\ii(k,m_k)}\big\}}}}.
\end{equation}
In view of Remarks~\ref{raQg507} and 
\ref{KWghy5-f-09}\ref{KWghy5-f-09i},
$\alpha'\geq\alpha$ and therefore $\alpha$ provides a larger
range for the relaxation parameters $(\lambda_n)_{n\in\NN}$.
\end{remark}

The subsequent applications require the following technical fact.

\begin{lemma}{\rm\cite[Lemma~2.2.2]{Poly87}}
\label{l:7}
Let $(\alpha_n)_{n\in\NN}$, $(\beta_n)_{n\in\NN}$, and 
$(\varepsilon_n)_{n\in\NN}$ be sequences in $\RP$ 
such that $\sum_{n\in\NN}\varepsilon_n<\pinf$ and 
$(\forall n\in\NN)$ $\alpha_{n+1}\leq\alpha_n-\beta_n
+\varepsilon_n$. Then $(\alpha_n)_{n\in\NN}$ converges and 
$\sum_{n\in\NN}\beta_n<\pinf$.
\end{lemma}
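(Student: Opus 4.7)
The plan is to absorb the perturbation $\varepsilon_n$ into $\alpha_n$ by adding its tail, thereby reducing the problem to a monotone-convergence argument on a surrogate sequence. Concretely, I would introduce
\begin{equation*}
\gamma_n=\alpha_n+\sum_{k=n}^{\pinf}\varepsilon_k,
\end{equation*}
which is well-defined in $\RP$ because $\sum_{n\in\NN}\varepsilon_n<\pinf$ and the three sequences take values in $\RP$.

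The central computation would be the telescoping estimate $\gamma_{n+1}\leq\gamma_n-\beta_n\leq\gamma_n$, obtained by substituting the standing recursion $\alpha_{n+1}\leq\alpha_n-\beta_n+\varepsilon_n$ into the definition of $\gamma_{n+1}$ and recognizing that $\varepsilon_n+\sum_{k\geq n+1}\varepsilon_k=\sum_{k\geq n}\varepsilon_k$. Once this inequality is established, the convergence assertion essentially writes itself: $(\gamma_n)_{n\in\NN}$ is non-increasing and bounded below by $0$, hence converges to some $\gamma\in\RP$; since the tail $\sum_{k\geq n}\varepsilon_k$ tends to $0$, it follows that $(\alpha_n)_{n\in\NN}$ converges to the same limit $\gamma$.

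For the summability of $(\beta_n)_{n\in\NN}$, I would sum the refined inequality $\beta_n\leq\gamma_n-\gamma_{n+1}$ for $n\in\{0,\ldots,N\}$, which telescopes to
\begin{equation*}
\sum_{n=0}^{N}\beta_n\leq\gamma_0-\gamma_{N+1}\leq\gamma_0<\pinf,
\end{equation*}
and then let $N\to\pinf$ to conclude $\sum_{n\in\NN}\beta_n<\pinf$.

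I do not anticipate any genuine obstacle in this argument; it is a classical deterministic analogue of the Robbins--Siegmund lemma. The only points to monitor are that the tail $\sum_{k\geq n}\varepsilon_k$ is finite (immediate from the hypothesis) and that the sign conditions on all three sequences are used to ensure $\gamma_n\geq 0$, so that the monotone-convergence argument applies.
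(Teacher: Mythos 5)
Your proof is correct and complete: the surrogate sequence $\gamma_n=\alpha_n+\sum_{k\geq n}\varepsilon_k$ satisfies $\gamma_{n+1}\leq\gamma_n-\beta_n$, and the monotone convergence of $(\gamma_n)_{n\in\NN}$ together with the vanishing tail and the telescoping bound on $\sum_n\beta_n$ gives both assertions. The paper itself does not prove this lemma but merely cites \cite[Lemma~2.2.2]{Poly87}; your argument is the standard one found there, so there is nothing to reconcile.
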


Next, we introduce a general iteration process for finding a
common fixed point of a countable family of averaged operators
which allows for approximate computations of the operator values.

\begin{proposition}
\label{paQg507-04a}
For every $n\in\NN$, let $\alpha_n\in\zeroun$,
let $\lambda_n\in\left]0,1/\alpha_n\right[$, let $e_n\in\HH$,
and let $T_n\colon\HH\to\HH$ be an $\alpha_n$-averaged operator. 
Suppose that $S=\bigcap_{n\in\NN}\Fix T_n\neq\emp$ and that 
$\sum_{n\in\NN}\lambda_n\|e_n\|<\pinf$. Let 
$x_0\in\HH$ and set, for every $n\in\NN$,
\begin{equation}
\label{eaQg507-01a}
x_{n+1}=x_n+\lambda_n\big(T_nx_n+e_n-x_n\big).
\end{equation}
Then the following hold:
\begin{enumerate}
\item
\label{paQg507-04a-i}
Let $n\in\NN$, let $x\in S$, and set 
$\nu=\sum_{k\in\NN}\lambda_k\|e_k\|+2\sup_{k\in\NN}\|x_k-x\|$. 
Then $\nu<\pinf$ and 
\begin{align}
\|x_{n+1}-x\|^2
&\leq\|x_n+\lambda_n(T_nx_n-x_n)-x\|^2+\nu\lambda_n\|e_n\|
\label{fjskd43h8X4a}\\
&\leq\|x_n-x\|^2-\lambda_n(1/\alpha_n-\lambda_n)
\|T_nx_n-x_n\|^2+\nu\lambda_n\|e_n\|.
\label{eaQg507-04p}
\end{align}
\item
\label{paQg507-04a-ii}
$\sum_{n\in\NN}\lambda_n(1/\alpha_n-\lambda_n)
\|T_nx_n-x_n\|^2<\pinf$.
\item
\label{paQg507-04a-iii}
$(x_n)_{n\in\NN}$ converges weakly to a point in $S$ if and only if
every weak sequential cluster point of $(x_n)_{n\in\NN}$ is in $S$.
In this case, the convergence is strong if $\inte S\neq\emp$.
\item
\label{paQg507-04a-iv}
$(x_n)_{n\in\NN}$ converges strongly to a point in $S$ if and only if
$\varliminf d_S(x_n)=0$. 
\end{enumerate}
\end{proposition}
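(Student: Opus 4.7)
The core of the argument is part~(i); (ii)--(iv) will then follow from Lemma~\ref{l:7} and standard facts on quasi-Fej\'er sequences. Fix $x\in S$. Since $T_nx=x$, Proposition~\ref{p:av1}\ref{p:av1ii} applied to $(x_n,x)$ combined with the convex-combination identity
\begin{equation*}
\|(1-\lambda_n)(x_n-x)+\lambda_n(T_nx_n-x)\|^2=(1-\lambda_n)\|x_n-x\|^2+\lambda_n\|T_nx_n-x\|^2-\lambda_n(1-\lambda_n)\|T_nx_n-x_n\|^2
\end{equation*}
yields the error-free relaxed Fej\'er estimate
\begin{equation*}
\|x_n+\lambda_n(T_nx_n-x_n)-x\|^2\leq\|x_n-x\|^2-\lambda_n(1/\alpha_n-\lambda_n)\|T_nx_n-x_n\|^2,
\end{equation*}
so in particular $\|x_n+\lambda_n(T_nx_n-x_n)-x\|\leq\|x_n-x\|$ since $\lambda_n<1/\alpha_n$. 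Writing $x_{n+1}-x=[x_n+\lambda_n(T_nx_n-x_n)-x]+\lambda_ne_n$ and using the triangle inequality then gives $\|x_{n+1}-x\|\leq\|x_n-x\|+\lambda_n\|e_n\|$; telescoping this bound over $0,\dots,n$ yields $\sup_k\|x_k-x\|\leq\|x_0-x\|+\sum_k\lambda_k\|e_k\|<\pinf$, hence $\nu<\pinf$. Expanding $\|x_{n+1}-x\|^2$ via $\|a+b\|^2\leq\|a\|^2+2\|a\|\|b\|+\|b\|^2$ with $a=x_n+\lambda_n(T_nx_n-x_n)-x$ and $b=\lambda_ne_n$, bounding $\|a\|\leq\|x_n-x\|\leq\sup_k\|x_k-x\|$ and absorbing $\lambda_n\|e_n\|\leq\sum_k\lambda_k\|e_k\|$ to turn the quadratic term into a linear-in-$\|e_n\|$ term, yields \eqref{fjskd43h8X4a} with the specified constant $\nu$; chaining with the error-free estimate above gives \eqref{eaQg507-04p}.

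For (ii), I apply Lemma~\ref{l:7} to the nonnegative sequences $(\|x_n-x\|^2)_{n\in\NN}$, $(\lambda_n(1/\alpha_n-\lambda_n)\|T_nx_n-x_n\|^2)_{n\in\NN}$, and $(\nu\lambda_n\|e_n\|)_{n\in\NN}$: the recursion \eqref{eaQg507-04p} is exactly the hypothesis of the lemma, the summability $\sum_n\lambda_n\|e_n\|<\pinf$ controls the perturbation, and the conclusion delivers both the desired sum and the convergence of $(\|x_n-x\|^2)_{n\in\NN}$ for every $x\in S$, i.e.\ quasi-Fej\'er monotonicity of $(x_n)_{n\in\NN}$ with respect to $S$ with summable error.

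Parts (iii) and (iv) are then immediate consequences of the general convergence theory of quasi-Fej\'er sequences documented in \cite{Livre1}: the Opial-type characterization of weak convergence (for the equivalence in (iii)), the open-ball argument when $\inte S\neq\emp$ (for strong convergence in (iii)), and the $\varliminf d_S=0$ criterion for strong convergence (for (iv)). The one delicate point in the plan is the logical ordering inside (i)---the error-free Fej\'er estimate must be established first in order to produce boundedness of $(x_n)_{n\in\NN}$, which alone makes $\nu$ finite and $n$-independent, which only then validates the two perturbed estimates \eqref{fjskd43h8X4a}--\eqref{eaQg507-04p}; once this order is respected, the remaining parts reduce to routine applications of Lemma~\ref{l:7} and of well-known properties of quasi-Fej\'er sequences.
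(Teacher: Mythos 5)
Your argument is correct and follows essentially the same route as the paper: the only cosmetic difference is that you invoke Proposition~\ref{p:av1}\ref{p:av1ii} together with the identity of \cite[Corollary~2.14]{Livre1} directly in the parameter $\lambda_n$, whereas the paper first rewrites $T_n=(1-\alpha_n)\Id+\alpha_n R_n$ and runs the same computation with the genuine convex-combination parameter $\mu_n=\alpha_n\lambda_n\in\zeroun$, the two being algebraically equivalent, and your ordering of the steps in (i) (error-free Fej\'er estimate, then boundedness and finiteness of $\nu$, then the perturbed estimates) matches the paper's. The one detail worth adding is that in (iv) one should observe that $S$ is closed (each $\Fix T_n$ is closed because $T_n$ is nonexpansive) before applying the $\varliminf d_S(x_n)=0$ criterion to conclude strong convergence to a point \emph{of} $S$.
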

\begin{proof}
\ref{paQg507-04a-i}: 
Set 
\begin{equation}
\label{eaQg507-04u}
R_n=(1-1/\alpha_n)\Id+(1/\alpha_n)T_n
\quad\text{and}\quad\mu_n=\alpha_n\lambda_n. 
\end{equation}
Then $\Fix R_n=\Fix T_n$ and, by Proposition~\ref{p:av1}, $R_n$ 
is nonexpansive. Furthermore, \eqref{eaQg507-01a} can be 
written as
\begin{equation}
\label{eaQg507-04a}
x_{n+1}=x_n+\mu_n\big(R_nx_n-x_n\big)
+\lambda_ne_n,\quad\text{where}\quad\mu_n\in\zeroun. 
\end{equation}
Now set $z_n=x_n+\mu_n(R_nx_n-x_n)$. 
Since $x\in\Fix R_n$ and $R_n$ is nonexpansive, we have
\begin{align}
\label{eaQg507-04t}
\|z_n-x\|
&=\|(1-\mu_n)(x_n-x)+\mu_n(R_nx_n-R_nx)\|\nonumber\\
&\leq(1-\mu_n)\|x_n-x\|+\mu_n\|R_nx_n-R_nx\|\nonumber\\
&\leq\|x_n-x\|.
\end{align}
Hence, \eqref{eaQg507-04a} yields
\begin{align}
\|x_{n+1}-x\|
&\leq\|z_n-x\|+\lambda_n\|e_n\|
\label{eaQg507-04c}\\
&\leq\|x_n-x\|+\lambda_n\|e_n\|
\label{eaQg507-04b}
\end{align}
and, since $\sum_{k\in\NN}\lambda_k\|e_k\|<\pinf$, it follows from 
Lemma~\ref{l:7} that 
\begin{equation}
\label{e:mu}
\nu=\sum_{k\in\NN}\lambda_k\|e_k\|+
2 \underset{k\in\NN}{\rm\text{sup}\,}\|x_k-x\|<\pinf.
\end{equation}
Moreover, using \eqref{eaQg507-04c},
\eqref{eaQg507-04t}, and \cite[Corollary~2.14]{Livre1},
we can write
\begin{align}
\|x_{n+1}-x\|^2
&\leq\|z_n-x\|^2+(2\|z_n-x\|+\lambda_n\|e_n\|)
\lambda_n\|e_n\|\nonumber\\
&\leq\|z_n-x\|^2+(2\|x_n-x\|+\lambda_n\|e_n\|)
\lambda_n\|e_n\|\nonumber\\
&\leq\|(1-\mu_n)(x_n-x)+\mu_n(R_nx_n-x)\|^2+
\nu\lambda_n\|e_n\|\label{eaQg507-04s}\\
&=(1-\mu_n)\|x_n-x\|^2+\mu_n\|R_nx_n-x\|^2\nonumber\\
&\quad\;-\mu_n(1-\mu_n)\|R_nx_n-x_n\|^2+\nu\lambda_n\|e_n\|
\nonumber\\
&=(1-\mu_n)\|x_n-x\|^2+\mu_n\|R_nx_n-R_nx\|^2\nonumber\\
&\quad\;-\mu_n(1-\mu_n)\|R_nx_n-x_n\|^2+\nu\lambda_n\|e_n\|
\nonumber\\
&\leq\|x_n-x\|^2-\mu_n(1-\mu_n)\|R_nx_n-x_n\|^2+\nu\lambda_n\|e_n\|
\nonumber\\
&=\|x_n-x\|^2-\lambda_n(1/\alpha_n-\lambda_n)
\|T_nx_n-x_n\|^2+\nu\lambda_n\|e_n\|
\label{eaQg507-04i}\\
&\leq\|x_n-x\|^2+\nu\lambda_n\|e_n\|.
\label{eaQg507-04h}
\end{align}
Thus, \eqref{fjskd43h8X4a} follows from 
\eqref{eaQg507-04u} and \eqref{eaQg507-04s}, and 
\eqref{eaQg507-04i} provides \eqref{eaQg507-04p}.

\ref{paQg507-04a-ii}: This follows from 
\eqref{eaQg507-04p}, \eqref{e:mu}, and Lemma~\ref{l:7}.

\ref{paQg507-04a-iii}: 
The weak convergence statement follows from \eqref{e:mu}, 
\eqref{eaQg507-04h}, and \cite[Theorem~3.8]{Else01}, while 
the strong convergence statement follows from 
\cite[Proposition~3.10]{Else01}.

\ref{paQg507-04a-iv}:
By \cite[Corollary~4.15]{Livre1}, the sets $(\Fix T_n)_{n\in\NN}$ 
are closed, and so is therefore their intersection $S$. Hence, the 
result follows from \eqref{e:mu}, \eqref{eaQg507-04h}, 
\ref{paQg507-04a-ii}, and \cite[Theorem~3.11]{Else01}.
\end{proof}

The main result of this section is the following.

\begin{theorem}
\label{cr7seGhn3243gd4}
Let $\varepsilon\in\left]0,1/2\right[$, let $m\geq 2$ be an 
integer, let $x_0\in\HH$, and define $\phi$ as in 
\eqref{hhghwedw710f27}. 
For every $i\in\{1,\ldots,m\}$ and every $n\in\NN$, let 
$\alpha_{i,n}\in\zeroun$,
let $T_{i,n}\colon\HH\to\HH$ be $\alpha_{i,n}$-averaged, 
and let $e_{i,n}\in\HH$. For every $n\in\NN$, let
$\lambda_n\in\left]0,(1-\varepsilon)(1+\varepsilon
\phi(\alpha_{1,n},\ldots,\alpha_{m,n}))/
\phi(\alpha_{1,n},\ldots,\alpha_{m,n})\right]$ and set 
\begin{equation}
\label{e:main}
x_{n+1}=x_n+\lambda_n\bigg(T_{1,n}\bigg(T_{2,n}
\big(\cdots T_{m-1,n}(T_{m,n}x_n+e_{m,n})
+e_{m-1,n}\cdots\big)+e_{2,n}\bigg)
+e_{1,n}-x_n\bigg).
\end{equation}
Suppose that 
\begin{equation}
\label{eaQg507-04y}
S=\bigcap_{n\in\NN}\Fix(T_{1,n}\cdots T_{m,n})\neq\emp
\quad{and}\quad
(\forall i\in\{1,\ldots,m\})\quad
\sum_{n\in\NN}\lambda_n\|e_{i,n}\|<\pinf,
\end{equation}
and define
\begin{equation}
\label{eaQg507-04o}
(\forall i\in\{1,\ldots,m\})(\forall n\in\NN)\quad T_{i+,n}=
\begin{cases}
T_{i+1,n}\cdots T_{m,n},&\text{if}\;\;i\neq m;\\
\Id,&\text{if}\;\;i=m.
\end{cases}
\end{equation}
Then the following hold:
\begin{enumerate}
\item
\label{cr7seGhn3243gd4i}
$\sum_{n\in\NN}\lambda_n(1/\phi(\alpha_{1,n},\ldots,\alpha_{m,n})
-\lambda_n)\|T_{1,n}\cdots T_{m,n}x_n-x_n\|^2<\pinf$.
\item
\label{cr7seGhn3243gd4ii}
$(\forall x\in S)$ 
$\underset{1\leq i\leq m}{\text{\rm max}}\Sum_{n\in\NN}
\Frac{\lambda_n(1-\alpha_{i,n})}{\alpha_{i,n}}
\left\|(\Id-T_{i,n})T_{i+,n}x_{n}-(\Id-T_{i,n})T_{i+,n}x
\right\|^2<\pinf$.
\item
\label{cr7seGhn3243gd4iii}
$(x_n)_{n\in\NN}$ converges weakly to a point in $S$ if and only if
every weak sequential cluster point of $(x_n)_{n\in\NN}$ is in $S$.
In this case, the convergence is strong if $\inte S\neq\emp$.
\item
\label{cr7seGhn3243gd4iv}
$(x_n)_{n\in\NN}$ converges strongly to a point in $S$ if and only 
if $\varliminf d_S(x_n)=0$. 
\end{enumerate}
\end{theorem}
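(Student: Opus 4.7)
Set $T_n=T_{1,n}\cdots T_{m,n}$. By Proposition~\ref{paQg507-03'}, $T_n$ is $\phi_n$-averaged with $\phi_n=\phi(\alpha_{1,n},\ldots,\alpha_{m,n})$. The plan is to view \eqref{e:main} as a perturbed iteration of the form analyzed in Proposition~\ref{paQg507-04a}. Propagating the inner errors outward through the nonexpansive factors gives
\begin{equation*}
T_{1,n}\big(T_{2,n}(\cdots T_{m,n}x_n+e_{m,n}\cdots)+e_{2,n}\big)+e_{1,n}=T_nx_n+e_n,
\end{equation*}
where a telescoping estimate using the nonexpansiveness of each $T_{i,n}$ yields $\|e_n\|\leq\sum_{i=1}^m\|e_{i,n}\|$, so $\sum_n\lambda_n\|e_n\|<\pinf$ by \eqref{eaQg507-04y}. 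The hypothesis on $\lambda_n$ gives $\lambda_n\phi_n\leq(1-\varepsilon)(1+\varepsilon\phi_n)<1$, hence $\lambda_n\in\,]0,1/\phi_n[$, and Proposition~\ref{paQg507-04a} applies with $\alpha_n=\phi_n$. Its parts \ref{paQg507-04a-ii}, \ref{paQg507-04a-iii}, and \ref{paQg507-04a-iv} then deliver \ref{cr7seGhn3243gd4i}, \ref{cr7seGhn3243gd4iii}, and \ref{cr7seGhn3243gd4iv} at once.

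The core work is \ref{cr7seGhn3243gd4ii}. Fix $x\in S$, so that $T_nx=x$ and $T_{i,n}T_{i+,n}x=T_{(i-1)+,n}x$ for each $i$. Applying Proposition~\ref{p:av1}\ref{p:av1ii} to each factor $T_{i,n}$ at the pair $(T_{i+,n}x_n,T_{i+,n}x)$ and telescoping from $i=m$ down to $i=1$ yields the per-factor refinement
\begin{equation*}
\|T_nx_n-x\|^2\leq\|x_n-x\|^2-\sum_{i=1}^{m}\frac{1-\alpha_{i,n}}{\alpha_{i,n}}\big\|(\Id-T_{i,n})T_{i+,n}x_n-(\Id-T_{i,n})T_{i+,n}x\big\|^2.
\end{equation*}
Combining this bound with the identity $\|(1-\lambda_n)a+\lambda_n b\|^2=(1-\lambda_n)\|a\|^2+\lambda_n\|b\|^2-\lambda_n(1-\lambda_n)\|a-b\|^2$ applied at $a=x_n-x$, $b=T_nx_n-x$, and with \eqref{fjskd43h8X4a}, produces
\begin{equation*}
\|x_{n+1}-x\|^2\leq\|x_n-x\|^2-\lambda_n\!\sum_{i=1}^{m}\!\frac{1-\alpha_{i,n}}{\alpha_{i,n}}\big\|(\Id-T_{i,n})T_{i+,n}x_n-(\Id-T_{i,n})T_{i+,n}x\big\|^2-\lambda_n(1-\lambda_n)\|T_nx_n-x_n\|^2+\nu\lambda_n\|e_n\|.
\end{equation*}

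The main obstacle is that $-\lambda_n(1-\lambda_n)\|T_nx_n-x_n\|^2$ is positive whenever $\lambda_n>1$, which the hypothesis permits. To absorb it, I would exploit the companion lower bound
\begin{equation*}
\sum_{i=1}^{m}\frac{1-\alpha_{i,n}}{\alpha_{i,n}}\big\|(\Id-T_{i,n})T_{i+,n}x_n-(\Id-T_{i,n})T_{i+,n}x\big\|^2\geq\frac{1-\phi_n}{\phi_n}\|T_nx_n-x_n\|^2,
\end{equation*}
which is the inequality established (after dividing by $\tau$) in \eqref{e:TfgtF5e9-14b} and which extends to arbitrary $m\geq 2$ by the same induction as in Proposition~\ref{paQg507-03'}. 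Splitting the sum as $\varepsilon\sum+(1-\varepsilon)\sum$ and applying the lower bound to the $(1-\varepsilon)$-portion, the coefficient of $\|T_nx_n-x_n\|^2$ becomes $-\lambda_n\bigl[(1-\varepsilon)(1-\phi_n)/\phi_n+(1-\lambda_n)\bigr]$, and an elementary computation shows that the hypothesis $\lambda_n\leq(1-\varepsilon)(1+\varepsilon\phi_n)/\phi_n$ is precisely what guarantees this bracket to be nonnegative (equivalently, $\lambda_n-1\leq(1-\varepsilon)(1-\phi_n)/\phi_n$), so the term can be dropped. The resulting estimate
\begin{equation*}
\|x_{n+1}-x\|^2\leq\|x_n-x\|^2-\varepsilon\lambda_n\sum_{i=1}^{m}\frac{1-\alpha_{i,n}}{\alpha_{i,n}}\big\|(\Id-T_{i,n})T_{i+,n}x_n-(\Id-T_{i,n})T_{i+,n}x\big\|^2+\nu\lambda_n\|e_n\|
\end{equation*}
falls within the scope of Lemma~\ref{l:7}, which delivers summability over $n$ of the entire sum, and hence summability of each of the $m$ individual series, establishing \ref{cr7seGhn3243gd4ii}.
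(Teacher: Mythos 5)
Your argument is correct, and for parts \ref{cr7seGhn3243gd4i}, \ref{cr7seGhn3243gd4iii}, and \ref{cr7seGhn3243gd4iv} it coincides with the paper's proof (same reduction to Proposition~\ref{paQg507-04a} via the error bound $\|e_n\|\leq\sum_{i=1}^m\|e_{i,n}\|$ and the check $\lambda_n<1/\phi_n$). For part \ref{cr7seGhn3243gd4ii}, however, you take a genuinely different route. The paper works with $\beta_n=\lambda_n\max_{1\leq i\leq m}(\cdot)$ rather than the full sum, and it neutralizes the problematic term $\lambda_n(\lambda_n-1)\|T_nx_n-x_n\|^2$ by the implication chain \eqref{fjskd43h8X5a}, which bounds it by $\varepsilon^{-1}\lambda_n(1/\alpha_n-\lambda_n)\|T_nx_n-x_n\|^2$; this quantity is summable by part \ref{cr7seGhn3243gd4i}, so it is simply folded into the summable perturbation $\varepsilon_n$ of Lemma~\ref{l:7}. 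You instead cancel that term pointwise: you split off an $\varepsilon$-fraction of the sum and use the lower bound $\sum_{i}\frac{1-\alpha_{i,n}}{\alpha_{i,n}}\|\cdot\|^2\geq\frac{1-\phi_n}{\phi_n}\|T_nx_n-x_n\|^2$, which is the $m$-operator analogue of \eqref{e:TfgtF5e9-14b}. That bound is true but is only proved in the paper for $m=2$; your inductive sketch does work, and in fact it has a one-line proof, since $(\Id-T_n)x_n-(\Id-T_n)x=\sum_{i=1}^m\big((\Id-T_{i,n})T_{i+,n}x_n-(\Id-T_{i,n})T_{i+,n}x\big)$ telescopes and the weighted Cauchy--Schwarz inequality with weights $\alpha_{i,n}/(1-\alpha_{i,n})$ summing to $\phi_n/(1-\phi_n)$ gives exactly the claim. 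Your verification that the step-size hypothesis makes the resulting bracket nonnegative is correct (it even leaves slack $\varepsilon^2$, so ``precisely'' is a slight overstatement). The trade-off: the paper's argument avoids the $m$-fold lower bound but leans on part \ref{cr7seGhn3243gd4i} inside part \ref{cr7seGhn3243gd4ii}; yours is self-contained at that point and yields the marginally stronger conclusion that the \emph{sum} over $i$ of the series is finite, at the cost of one extra (easy) inequality not stated in the paper for general $m$.
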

\begin{proof}
Let $n\in\NN$ and let $x\in S$. We can rewrite \eqref{e:main} as
an instance of \eqref{eaQg507-01a}, namely
\begin{equation}
\label{e:zn}
x_{n+1}=x_n+\lambda_n\big(T_nx_n+e_n-x_n\big),
\end{equation}
where
\begin{equation}
T_n=T_{1,n}\cdots T_{m,n}
\end{equation}
and
\begin{equation}
\label{e:5thonf}
e_n=T_{1,n}\bigg(T_{2,n}\big(\cdots 
T_{m-1,n}(T_{m,n}x_n+e_{m,n})+e_{m-1,n}\cdots\big)+e_{2,n}\bigg)
+e_{1,n}-T_{1,n}\cdots T_{m,n}x_n.
\end{equation}
It follows from 
Proposition~\ref{paQg507-03'} that 
\begin{equation}
\label{eaQg507-04x}
T_n\;\text{is $\alpha_n$-averaged, where}\;
\alpha_n=\phi(\alpha_{1,n},\ldots,\alpha_{m,n}).
\end{equation}
Since $\alpha_n\in\zeroun$, 
\begin{equation}
\label{e:w}
\frac{(1-\varepsilon)(1+\varepsilon\alpha_n)}{\alpha_n}
<\frac{(1-\varepsilon)(1+\varepsilon)}{\alpha_n}
=\frac{1-\varepsilon^2}{\alpha_n}
<\frac{1}{\alpha_n}
\end{equation}
and therefore $\lambda_n\in\left]0,1/\alpha_n\right[$, as
required in Proposition~\ref{paQg507-04a}.  

\ref{cr7seGhn3243gd4i}:
Using the nonexpansiveness of the operators 
$(T_{i,n})_{1\leq i\leq m}$, we derive from
\eqref{e:5thonf} that
\begin{align}
\label{e:qf8}
\|e_n\|
&\leq\|e_{1,n}\|+\nonumber\\[2mm]
&\quad\;\bigg\|T_{1,n}\bigg(T_{2,n}\big(\cdots T_{m-1,n}(T_{m,n}x_n
+e_{m,n})+e_{m-1,n}\cdots\big)+e_{2,n}\bigg)-T_{1,n}\cdots 
T_{m,n}x_n \bigg\|\nonumber\\[2mm]
&\leq\|e_{1,n}\|+\nonumber\\[2mm]
&\quad\;\bigg\|T_{2,n}\bigg(T_{3,n}\big(\cdots T_{m-1,n}
(T_{m,n}x_n+e_{m,n})+e_{m-1,n}\cdots\big)+e_{3,n}\bigg)+e_{2,n}-
T_{2,n}\cdots T_{m,n}x_n\bigg\|\nonumber\\[2mm]
&\leq\|e_{1,n}\|+\|e_{2,n}\|+\nonumber\\[2mm]
&\quad\;\bigg\|T_{3,n}\bigg(T_{4,n}\big(\cdots T_{m-1,n}
(T_{m,n}x_n+e_{m,n})+e_{m-1,n}\cdots\big)+e_{4,n}\bigg)+e_{3,n}-
T_{3,n}\cdots T_{m,n}x_n\bigg\|\nonumber\\[2mm]
&\;\;\vdots\nonumber\\
&\leq\sum_{i=1}^m\|e_{i,n}\|.
\end{align}
Accordingly, \eqref{eaQg507-04y} yields
\begin{equation}
\label{e:efwrf7}
\sum_{k\in\NN}\lambda_k\|e_k\|<\pinf.
\end{equation}
Hence, we deduce from 
Proposition~\ref{paQg507-04a}\ref{paQg507-04a-i} that
\begin{equation}
\label{e:mu'}
\nu=\sum_{k\in\NN}\lambda_k\|e_k\|+
2\underset{k\in\NN}{\rm\text{sup}\,}\|x_k-x\|<\pinf
\end{equation}
and from
Proposition~\ref{paQg507-04a}\ref{paQg507-04a-ii} that 
\begin{equation}
\label{e:2011-11-23}
\sum_{k\in\NN}\lambda_k\Big(\frac{1}{\alpha_k}-\lambda_k\Big)
\|T_kx_k-x_k\|^2<\pinf.
\end{equation}

\ref{cr7seGhn3243gd4ii}:
We derive from Proposition~\ref{p:av1} that
\begin{align}
\label{eaQg507-05a}
(\forall i\in\{1,\ldots,m\})(\forall (u,v)\in\HH^2)\nonumber\\
\|T_{i,n}u-T_{i,n}v\|^2
&\leq\|u-v\|^2-\frac{1-\alpha_{i,n}}{\alpha_{i,n}}
\|(\Id-T_{i,n})u-(\Id-T_{i,n})v\|^2.
\end{align}
Using this inequality $m$ times leads to 
\begin{align}
\label{eaQg507-04q}
\|T_nx_n-x\|^2
&=\left\|T_{1,n}\cdots T_{m,n}x_n-T_{1,n}\cdots T_{m,n}x\right\|^2
\nonumber\\
&\leq\|x_n-x\|^2-\sum_{i=1}^{m}\frac{1-\alpha_{i,n}}{\alpha_{i,n}}
\left\|(\Id-T_{i,n})T_{i+,n}x_n-(\Id-T_{i,n})T_{i+,n}x\right\|^2
\nonumber\\
&\leq\|x_n-x\|^2-\frac{\beta_n}{\lambda_n},
\end{align}
where
\begin{equation}
\label{Kiuxv532}
\beta_n=\lambda_n
\underset{1\leq i\leq m}{\text{\rm max}}
\bigg(\frac{1-\alpha_{i,n}}{\alpha_{i,n}}
\left\|(\Id-T_{i,n})T_{i+,n}x_n-(\Id-T_{i,n})T_{i+,n}x\right\|^2
\bigg).
\end{equation}
Note also that 
\begin{eqnarray}
\label{fjskd43h8X5a}
\lambda_n\leq\frac{(1-\varepsilon)(1+\varepsilon\alpha_n)}{\alpha_n}
&\Rightarrow&
\lambda_n\leq\frac{1+\varepsilon\alpha_n}
{(1+\varepsilon)\alpha_n}\nonumber\\
&\Leftrightarrow&
\bigg(1+\frac{1}{\varepsilon}\bigg)\lambda_n\leq
\frac{1}{\varepsilon\alpha_n}+1\nonumber\\
&\Leftrightarrow&
\lambda_n-1\leq\frac{1}{\varepsilon}
\bigg(\frac{1}{\alpha_n}-\lambda_n\bigg).
\end{eqnarray}
Thus, Proposition~\ref{paQg507-04a}\ref{paQg507-04a-i}, 
\eqref{e:zn}, and \cite[Corollary~2.14]{Livre1} yield
\begin{align}
\label{eaQg507-04r}
\|x_{n+1}-x\|^2
&\leq\|(1-\lambda_n)(x_n-x)+\lambda_n(T_nx_n-x)\|^2+
\nu\lambda_n\|e_n\|\nonumber\\
&=(1-\lambda_n)\|x_n-x\|^2+\lambda_n\|T_nx_n-x\|^2
+\lambda_n(\lambda_n-1)\|T_nx_n-x_n\|^2+\nu\lambda_n\|e_n\|
\nonumber\\
&\leq(1-\lambda_n)\|x_n-x\|^2+\lambda_n\|T_nx_n-x\|^2+\varepsilon_n,
\end{align}
where
\begin{equation}
\label{eaQg507-03i}
\varepsilon_n=\frac{\lambda_n}{\varepsilon}
\bigg(\frac{1}{\alpha_n}-\lambda_n\bigg)
\|T_nx_n-x_n\|^2+\nu\lambda_n\|e_n\|.
\end{equation}
On the one hand, it follows from \eqref{e:efwrf7}, 
\eqref{e:mu'}, and \eqref{e:2011-11-23} that
\begin{equation}
\label{eaQg507-05c}
\sum_{k\in\NN}\varepsilon_k<\pinf.
\end{equation}
On the other hand, combining \eqref{eaQg507-04q} and 
\eqref{eaQg507-04r}, we obtain
\begin{equation}
\label{eaQg507-05b}
\|x_{n+1}-x\|^2\leq\|x_n-x\|^2-\beta_n+\varepsilon_n.
\end{equation}
Consequently, Lemma~\ref{l:7} implies that
$\sum_{k\in\NN}\beta_k<\pinf$. 

\ref{cr7seGhn3243gd4iii}--\ref{cr7seGhn3243gd4iv}:
These follow from their counterparts in 
Proposition~\ref{paQg507-04a}.
\end{proof}

\begin{remark}
Theorem~\ref{cr7seGhn3243gd4} extends the results of 
\cite[Section~3]{Opti04}, where the relaxations parameters 
$(\lambda_n)_{n\in\NN}$ cannot exceed 1. Since these parameters
control the step-lengths of the algorithm, the proposed extension
can result in significant accelerations.
\end{remark}

\section{Application to forward-backward splitting}
\label{sec:4}

The forward-backward algorithm is one of the most versatile and
powerful algorithm for finding a zero of the sum of two maximally 
monotone operators (see \cite{Svva10,Opti14} and the references 
therein for historical background and recent developments). 
In \cite{Opti04},
the first author showed that the theory of averaged nonexpansive
operators provided a convenient setting for analyzing this
algorithm. In this section, we exploit the results of 
Sections~\ref{sec:2} and~\ref{sec:3} to further extend this 
analysis and obtain a new version of the forward-backward algorithm
with an extended relaxation range.

Let us recall a few facts about monotone set-valued operators
and convex analysis \cite{Livre1}. 
Let $A\colon\HH\to 2^{\HH}$ be a set-valued operator. 
The domain, the graph, and the set of zeros of $A$ are 
respectively defined by $\dom A=\menge{x\in\HH}{Ax\neq\emp}$,
$\gra A=\menge{(x,u)\in\HH\times\HH}{u\in Ax}$, and 
$\zer A=\menge{x\in\HH}{0\in Ax}$. The inverse of $A$ is
$A^{-1}\colon\HH\mapsto 2^{\HH}\colon u\mapsto 
\menge{x\in\HH}{u\in Ax}$, and the resolvent of $A$ is
\begin{equation}
\label{e:resolvent}
J_A=(\Id+A)^{-1}.
\end{equation}
This operator is firmly nonexpansive if $A$ is monotone, i.e.,
\begin{equation}
(\forall(x,y)\in\HH\times\HH)
(\forall(u,v)\in Ax\times Ay)\quad\scal{x-y}{u-v}\geq 0,
\end{equation}
and $\dom J_A=\HH$ if, furthermore, $A$ is maximally monotone,
i.e., there exists no monotone operator $B\colon\HH\to2^\HH$ such 
that $\gra A\subset\gra B$ and $A\neq B$. We denote by 
$\Gamma_0(\HH)$ the class of proper lower semicontinuous 
convex functions $f\colon\HH\to\RX$. Let
$f\in\Gamma_0(\HH)$. 
For every $x\in\HH$, $f+\|x-\cdot\|^2/2$ possesses a 
unique minimizer, which is denoted by $\prox_fx$. We have 
\begin{equation}
\label{e:prox2}
\prox_f=J_{\partial f},\quad\text{where}\quad
\partial f\colon\HH\to 2^{\HH}\colon x\mapsto
\menge{u\in\HH}{(\forall y\in\HH)\;\:\scal{y-x}{u}+f(x)\leq f(y)} 
\end{equation}
is the subdifferential of $f$. 

We start with a specialization of Theorem~\ref{cr7seGhn3243gd4} 
to $m=2$.

\begin{corollary}
\label{koGhn843gd4}
Let $\varepsilon\in\left]0,1/2\right[$ and let 
$x_0\in\HH$. For every every $n\in\NN$, let 
$\alpha_{1,n}\in\left]0,1/(1+\varepsilon)\right]$, let
$\alpha_{2,n}\in\left]0,1/(1+\varepsilon)\right]$, 
let $T_{1,n}\colon\HH\to\HH$ be $\alpha_{1,n}$-averaged, 
let $T_{2,n}\colon\HH\to\HH$ be $\alpha_{2,n}$-averaged, let
$e_{1,n}\in\HH$, and let $e_{2,n}\in\HH$. In addition, for every 
every $n\in\NN$, let
\begin{equation}
\label{ewfw7reww23a}
\lambda_n\in\left[\varepsilon,
\frac{(1-\varepsilon)(1+\varepsilon\phi_n)}{\phi_n}\right], 
\quad\text{where}\quad
\phi_n=\frac{\alpha_{1,n}+\alpha_{2,n}-2\alpha_{1,n}\alpha_{2,n}}
{1-\alpha_{1,n}\alpha_{2,n}}, 
\end{equation}
and set 
\begin{equation}
\label{ooio9o0vd59}
x_{n+1}=x_n+\lambda_n\Big(T_{1,n}\big(T_{2,n}x_n+
e_{2,n}\big)+e_{1,n}-x_n\Big).
\end{equation}
Suppose that 
\begin{equation}
\label{ooio9o0vd51}
S=\bigcap_{n\in\NN}\Fix(T_{1,n} T_{2,n})\neq\emp,\quad
\Sum_{n\in\NN}\lambda_n\|e_{1,n}\|<\pinf,
\quad\text{and}\quad
\Sum_{n\in\NN}\lambda_n\|e_{2,n}\|<\pinf.
\end{equation}
Then the following hold:
\begin{enumerate}
\item
\label{koGhn843gd4ii}
$(\forall x\in S)$ $\sum_{n\in\NN}
\|T_{1,n}T_{2,n}x_n-T_{2,n}x_n+T_{2,n}x-x\|^2<\pinf$.
\item
\label{koGhn843gd4ii+}
$(\forall x\in S)$
$\sum_{n\in\NN}\|T_{2,n}x_n-x_n-T_{2,n}x+x\|^2<\pinf$.
\item
\label{koGhn843gd4ii++}
$\sum_{n\in\NN}\|T_{1,n}T_{2,n}x_n-x_n\|^2<\pinf$.
\item
\label{koGhn843gd4iii}
Suppose that every weak sequential cluster point of 
$(x_n)_{n\in\NN}$ is in $S$. Then $(x_n)_{n\in\NN}$ converges 
weakly to a point in $S$, and the convergence is strong if 
$\inte S\neq\emp$.
\item
\label{koGhn843gd4iv}
Suppose that $\varliminf d_S(x_n)=0$. Then $(x_n)_{n\in\NN}$ 
converges strongly to a point in $S$.
\end{enumerate}
\end{corollary}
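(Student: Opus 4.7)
The plan is to recognize the corollary as a direct specialization of Theorem~\ref{cr7seGhn3243gd4} to the case $m=2$, and to then extract the four advertised assertions by arguing that, under the assumption $\alpha_{i,n}\in\left]0,1/(1+\varepsilon)\right]$ together with $\lambda_n\geq\varepsilon$, every multiplicative coefficient appearing in the conclusions of that theorem is bounded below by a strictly positive constant depending only on $\varepsilon$. First, I would note that for $m=2$ Proposition~\ref{paQg507-03} gives $\phi(\alpha_{1,n},\alpha_{2,n})=\phi_n$, so the upper bound on $\lambda_n$ in \eqref{ewfw7reww23a} matches the hypothesis of Theorem~\ref{cr7seGhn3243gd4}; the lower bound $\lambda_n\geq\varepsilon$ is an extra (permitted) restriction. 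The summability assumptions on $(\|e_{i,n}\|)_{n\in\NN}$ in \eqref{ooio9o0vd51} are exactly those of the theorem.

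For \ref{koGhn843gd4ii++}, I would invoke Theorem~\ref{cr7seGhn3243gd4}\ref{cr7seGhn3243gd4i} and use the monotonicity of $\phi$ in each argument, which gives $\phi_n\leq 2/(2+\varepsilon)<1$, so that
\begin{equation*}
\lambda_n\bigg(\frac{1}{\phi_n}-\lambda_n\bigg)
\geq\varepsilon\bigg(\frac{1}{\phi_n}-\frac{(1-\varepsilon)
(1+\varepsilon\phi_n)}{\phi_n}\bigg)
=\frac{\varepsilon^2(1-(1-\varepsilon)\phi_n)}{\phi_n}
\geq c(\varepsilon)>0,
\end{equation*}
which lets me strip the prefactor from the sum. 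For \ref{koGhn843gd4ii} and \ref{koGhn843gd4ii+}, I would apply Theorem~\ref{cr7seGhn3243gd4}\ref{cr7seGhn3243gd4ii} for $i=1$ and $i=2$ respectively, using $T_{1+,n}=T_{2,n}$ and $T_{2+,n}=\Id$, together with $T_{1,n}T_{2,n}x=x$ (since $x\in S$) to rewrite
\begin{equation*}
(\Id-T_{1,n})T_{2,n}x_n-(\Id-T_{1,n})T_{2,n}x
=-(T_{1,n}T_{2,n}x_n-T_{2,n}x_n+T_{2,n}x-x),
\end{equation*}
and the analogous identity for $i=2$. The bound $\alpha_{i,n}\leq 1/(1+\varepsilon)$ yields $(1-\alpha_{i,n})/\alpha_{i,n}\geq\varepsilon$, and combined with $\lambda_n\geq\varepsilon$ the coefficient $\lambda_n(1-\alpha_{i,n})/\alpha_{i,n}$ is bounded below by $\varepsilon^2$, so the two sums are finite.

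Finally, \ref{koGhn843gd4iii} and \ref{koGhn843gd4iv} are immediate restatements of items \ref{cr7seGhn3243gd4iii} and \ref{cr7seGhn3243gd4iv} of Theorem~\ref{cr7seGhn3243gd4}. No real obstacle is expected; the only nontrivial step is the bookkeeping needed to confirm that the two universal constants $c(\varepsilon)>0$ and $\varepsilon^2$ do the job of converting the weighted summability statements of Theorem~\ref{cr7seGhn3243gd4} into the unweighted ones claimed here.
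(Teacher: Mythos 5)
Your proposal is correct, and for items \ref{koGhn843gd4ii}, \ref{koGhn843gd4ii+}, \ref{koGhn843gd4iii}, and \ref{koGhn843gd4iv} it coincides with the paper's own proof: specialize Theorem~\ref{cr7seGhn3243gd4} to $m=2$, use $T_{1,n}T_{2,n}x=x$ to rewrite the displacement terms $(\Id-T_{i,n})T_{i+,n}x_n-(\Id-T_{i,n})T_{i+,n}x$, and bound the weights $\lambda_n(1-\alpha_{i,n})/\alpha_{i,n}$ below by $\varepsilon^2$ exactly as the authors do. The one place where you genuinely diverge is item \ref{koGhn843gd4ii++}. The paper never invokes Theorem~\ref{cr7seGhn3243gd4}\ref{cr7seGhn3243gd4i} there; it instead fixes $x\in S$, writes $T_{1,n}T_{2,n}x_n-x_n$ as the sum of the two displacements already controlled in \ref{koGhn843gd4ii} and \ref{koGhn843gd4ii+}, and applies $\|a+b\|^2\leq 2\|a\|^2+2\|b\|^2$. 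You go back to Theorem~\ref{cr7seGhn3243gd4}\ref{cr7seGhn3243gd4i} and strip the weight $\lambda_n(1/\phi_n-\lambda_n)$, which your computation correctly bounds below: since $1/\phi_n-\lambda_n\geq\varepsilon\big(1/\phi_n-(1-\varepsilon)\big)$ and $\phi_n<1$, one gets $\lambda_n(1/\phi_n-\lambda_n)\geq\varepsilon^3>0$, so the unweighted sum is finite. Both routes are valid; yours is slightly more direct and does not route \ref{koGhn843gd4ii++} through the $x$-dependent statements \ref{koGhn843gd4ii}--\ref{koGhn843gd4ii+}, while the paper's version needs no lower estimate on $1/\phi_n-\lambda_n$ and makes the implication \ref{koGhn843gd4ii}--\ref{koGhn843gd4ii+} $\Rightarrow$ \ref{koGhn843gd4ii++} explicit. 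All constants and identities in your argument check out.
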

\begin{proof}
For every $n\in\NN$, since $\phi_n\in\zeroun$, 
$\varepsilon<1-\varepsilon<(1-\varepsilon)(1/\phi_n+\varepsilon)$
and $\lambda_n$ is therefore well defined in 
\eqref{ewfw7reww23a}. Overall, the present setting is encompassed 
by that of Theorem~\ref{cr7seGhn3243gd4} with $m=2$. 

\ref{koGhn843gd4ii}--\ref{koGhn843gd4ii+}:
Let $x\in S$. We derive from 
Theorem~\ref{cr7seGhn3243gd4}\ref{cr7seGhn3243gd4ii}
with $m=2$ that
\begin{equation}
\label{ewfw7reww23c}
\begin{cases}
\Sum_{n\in\NN}
\Frac{\lambda_n(1-\alpha_{1,n})}{\alpha_{1,n}}
\left\|(\Id-T_{1,n})T_{2,n}x_n-(\Id-T_{1,n})T_{2,n}x
\right\|^2<\pinf\\
\Sum_{n\in\NN}\Frac{\lambda_n(1-\alpha_{2,n})}{\alpha_{2,n}}
\left\|(\Id-T_{2,n})x_n-(\Id-T_{2,n})x
\right\|^2<\pinf.
\end{cases}
\end{equation}
However, it follows from the assumptions that 
\begin{equation}
\label{ewfw7reww23b}
(\forall n\in\NN)\quad T_{1,n}T_{2,n}x=x,\quad 
\Frac{\lambda_n(1-\alpha_{1,n})}{\alpha_{1,n}}\geq\varepsilon^2,
\quad\text{and}\quad 
\Frac{\lambda_n(1-\alpha_{2,n})}{\alpha_{2,n}}\geq\varepsilon^2.
\end{equation}
Combining \eqref{ewfw7reww23c} and \eqref{ewfw7reww23b} yields 
the claims.

\ref{koGhn843gd4ii++}: Let $x\in S$. Then, for every
$n\in\NN$,
\begin{align}
\label{ewfw7reww23d}
\|T_{1,n}T_{2,n}x_n-x_n\|^2
&=\|(T_{1,n}T_{2,n}x_n-T_{2,n}x_n+T_{2,n}x-x)
+(T_{2,n}x_n-x_n-T_{2,n}x+x)\|^2\nonumber\\
&\leq 2\|T_{1,n}T_{2,n}x_n-T_{2,n}x_n+T_{2,n}x-x\|^2
+2\|T_{2,n}x_n-x_n-T_{2,n}x+x\|^2.
\end{align}
Hence the claim follows from 
\ref{koGhn843gd4ii}--\ref{koGhn843gd4ii+}.

\ref{koGhn843gd4iii}--\ref{koGhn843gd4iv}: These follow
from  Theorem~\ref{cr7seGhn3243gd4}\ref{cr7seGhn3243gd4iii}%
--\ref{cr7seGhn3243gd4iv}.
\end{proof}

\begin{definition}{\rm\cite[Definition~2.3]{Sico10}}
\label{d:demir}
An operator $A\colon\HH\to 2^{\HH}$ is \emph{demiregular} at
$x\in\dom A$ if, for every sequence $((x_n,u_n))_{n\in\NN}$ in 
$\gra A$ and every $u\in Ax$ such that $x_n\weakly x$ and 
$u_n\to u$, we have $x_n\to x$.
\end{definition}

Here are some examples of demiregular monotone operators.

\begin{lemma}{\rm\cite[Proposition~2.4]{Sico10}}
\label{l:2009-09-20}
Let $A\colon\HH\to 2^{\HH}$ be monotone and suppose that 
$x\in\dom A$. Then $A$ is demiregular at $x$ in each of the 
following cases:
\begin{enumerate}
\item
\label{l:2009-09-20i}
$A$ is uniformly monotone at $x$, i.e., there exists 
an increasing function 
$\theta\colon\RP\to\RPX$ that vanishes only 
at $0$ such that
$(\forall u\in Ax)(\forall (y,v)\in\gra A)$
$\scal{x-y}{u-v}\geq\theta(\|x-y\|)$.
\item
\label{l:2009-09-20ii}
$A$ is strongly monotone, i.e., there exists $\alpha\in\RPP$ such
that $A-\alpha\Id$ is monotone.
\item
\label{l:2009-09-20iv-}
$J_A$ is compact, i.e., for every bounded set $C\subset\HH$,
the closure of $J_A(C)$ is compact. In particular, 
$\dom A$ is boundedly relatively compact, i.e., the intersection of 
its closure with every closed ball is compact.
\item
\label{l:2009-09-20vi}
$A\colon\HH\to\HH$ is single-valued with a single-valued continuous
inverse.
\item
\label{l:2009-09-20vii}
$A$ is single-valued on $\dom A$ and $\Id-A$ is demicompact, i.e., 
for every bounded sequence $(x_n)_{n\in\NN}$ in $\dom A$ such 
that $(Ax_n)_{n\in\NN}$ converges strongly, $(x_n)_{n\in\NN}$ 
admits a strong cluster point.
\item
\label{p:2009-09-20ii+}
$A=\partial f$, where $f\in\Gamma_0(\HH)$ is uniformly convex
at $x$, i.e., there exists an increasing function  
$\theta\colon\RP\to\RPX$ that vanishes only at $0$ such that 
\begin{equation}
(\forall\alpha\in\zeroun)(\forall y\in\dom f)\quad
f\big(\alpha x+(1-\alpha) y\big)+\alpha(1-\alpha)\theta(\|x-y\|)
\leq\alpha f(x)+(1-\alpha)f(y).
\end{equation}
\item
\label{p:2009-09-20ii++++}
$A=\partial f$, where $f\in\Gamma_0(\HH)$ and, for every
$\xi\in\RR$, $\menge{x\in\HH}{f(x)\leq\xi}$ is boundedly compact.
\end{enumerate}
\end{lemma}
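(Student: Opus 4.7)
The plan is to verify demiregularity case by case, exploiting in each case the hypothesis in combination with the standing assumption that $u_n\in Ax_n$, $u\in Ax$, $x_n\weakly x$, and $u_n\to u$.

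For \ref{l:2009-09-20i}, I would apply uniform monotonicity at $x$ to the pair $(x_n,u_n),(x,u)$ to get $\scal{x-x_n}{u-u_n}\geq\theta(\|x-x_n\|)$. The inner product on the left is the pairing of a bounded sequence (since $x_n$ is weakly convergent) with a strongly null sequence, hence tends to $0$; by the characterization of $\theta$, this forces $\|x-x_n\|\to 0$. Case \ref{l:2009-09-20ii} is immediate from \ref{l:2009-09-20i} via $\theta(t)=\alpha t^2$, and case \ref{p:2009-09-20ii+} is reduced to \ref{l:2009-09-20i} by the standard fact that if $f$ is uniformly convex at $x$ with modulus $\theta$ then $\partial f$ is uniformly monotone at $x$ with modulus proportional to $\theta$ (add the two subgradient inequalities relating $x,y,\alpha x+(1-\alpha)y$ and let $\alpha\to 0,1$).

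For \ref{l:2009-09-20iv-}, set $y_n=x_n+u_n$. Then $J_Ay_n=x_n$ by \eqref{e:resolvent}, and $(y_n)_{n\in\NN}$ is bounded since $(x_n)_{n\in\NN}$ is bounded and $(u_n)_{n\in\NN}$ converges. Compactness of $J_A$ gives a strongly convergent subsequence of $(x_n)_{n\in\NN}$, whose limit must coincide with the weak limit $x$; the usual subsequence argument then yields $x_n\to x$. The bounded relative compactness of $\dom A$ implies $J_A$ compact by the same reasoning applied to $J_A(C)\subset\dom A$. Case \ref{l:2009-09-20vi} is direct: single-valuedness gives $x_n=A^{-1}u_n$, and continuity of $A^{-1}$ yields $x_n\to A^{-1}u=x$. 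Case \ref{l:2009-09-20vii} is similar: since $x_n-(\Id-A)x_n=Ax_n=u_n\to u$ and $(x_n)_{n\in\NN}$ is bounded, demicompactness of $\Id-A$ supplies a strong cluster point, which must equal $x$.

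For \ref{p:2009-09-20ii++++}, I would invoke the subgradient inequality $f(x_n)+\scal{x-x_n}{u_n}\leq f(x)$, whence
\begin{equation}
f(x_n)\leq f(x)+\|x_n-x\|\,\|u_n\|.
\end{equation}
Boundedness of $(x_n)_{n\in\NN}$ and $(u_n)_{n\in\NN}$ gives $\xi\in\RR$ with $x_n\in\lev{\xi}f$ for every $n$; by hypothesis this sublevel set is boundedly compact, so $(x_n)_{n\in\NN}$ has a strong cluster point, which again must be $x$, and a subsequence argument finishes the proof.

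The only point requiring genuine care is the weak-times-strong cancellation in \ref{l:2009-09-20i}, where one must notice that $u-u_n\to 0$ strongly allows the limit of $\scal{x-x_n}{u-u_n}$ to be computed despite the mere weak convergence of $x_n$; every other case reduces either to this uniform-monotonicity argument or to extracting a strong cluster point from a compactness hypothesis and identifying it with the weak limit.
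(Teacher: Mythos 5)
The paper does not prove this lemma; it is quoted verbatim from \cite{Sico10} (Proposition~2.4 there), and your argument is correct and essentially reproduces the proof given in that reference: the bounded-times-strongly-null pairing $\scal{x-x_n}{u-u_n}\to 0$ forcing $\theta(\|x-x_n\|)\to 0$ in the uniformly monotone cases, reduction of \ref{l:2009-09-20ii} and \ref{p:2009-09-20ii+} to \ref{l:2009-09-20i}, and extraction of a strong cluster point identified with the weak limit in the compactness cases. No gaps.
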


Our extended forward-backward splitting scheme can now be
presented.

\begin{proposition}
\label{lI8uhT612}
Let $\beta\in\RPP$, let 
$\varepsilon\in\left]0,\min\{1/2,\beta\}\right[$, let $x_0\in\HH$,
let $A\colon\HH\to 2^{\HH}$ be maximally monotone, and let 
$B\colon\HH\to\HH$ be $\beta$-cocoercive, i.e.,
\begin{equation}
\label{e:cocoercive}
(\forall x\in\HH)(\forall y\in\HH)\quad
\scal{x-y}{Bx-By}\geq\beta\|Bx-By\|^2.
\end{equation}
Furthermore, let $(\gamma_n)_{n\in\NN}$ be a sequence in
$\left[\varepsilon,2\beta/(1+\varepsilon)\right]$, and
let $(a_n)_{n\in\NN}$ and $(b_n)_{n\in\NN}$ be sequences in $\HH$
such that $\sum_{n\in\NN}\|a_n\|<\pinf$ and 
$\sum_{n\in\NN}\|b_n\|<\pinf$. Suppose that $\zer(A+B)\neq\emp$
and, for every $n\in\NN$, let 
\begin{equation}
\label{ewfw7reww25}
\lambda_n\in\left[\varepsilon,(1-\varepsilon)
\bigg(2+\varepsilon-\Frac{\gamma_n}{2\beta}\bigg)\right]
\end{equation}
and set
\begin{equation}
\label{e:23juillet2007-1}
x_{n+1}=x_n+\lambda_n\Big(J_{\gamma_n A}
\big(x_n-\gamma_n(Bx_n+b_n)\big)+a_n-x_n\Big).
\end{equation}
Then the following hold:
\begin{enumerate}
\item
\label{lI8uhT612i}
$\sum_{n\in\NN}\|J_{\gamma_n A}(x_n-\gamma_nBx_n)-x_n\|^2<\pinf$.
\item
\label{lI8uhT612ii}
Let $x\in\zer(A+B)$. Then $\sum_{n\in\NN}\|Bx_n-Bx\|^2<\pinf$.
\item
\label{lI8uhT612iii}
$(x_n)_{n\in\NN}$ converges weakly to a point in $\zer(A+B)$.
\item
\label{lI8uhT612iv}
Suppose that one of the following is satisfied:
\begin{enumerate}
\item
\label{lI8uhT612iva}
$A$ is demiregular at every point in $\zer(A+B)$.
\item
\label{lI8uhT612ivb}
$B$ is demiregular at every point in $\zer(A+B)$.
\item
\label{lI8uhT612ivc}
$\inte S\neq\emp$.
\end{enumerate}
Then $(x_n)_{n\in\NN}$ converges strongly to a point 
in $\zer(A+B)$.
\end{enumerate}
\end{proposition}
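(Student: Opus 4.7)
The plan is to recognize \eqref{e:23juillet2007-1} as an instance of the two-operator composition scheme studied in Corollary~\ref{koGhn843gd4}. Specifically, I would set, for every $n\in\NN$,
\begin{equation}
T_{1,n}=J_{\gamma_n A},\quad T_{2,n}=\Id-\gamma_n B,\quad
e_{1,n}=a_n,\quad e_{2,n}=-\gamma_n b_n,
\end{equation}
so that \eqref{e:23juillet2007-1} becomes \eqref{ooio9o0vd59}. Since $A$ is maximally monotone, $T_{1,n}$ is firmly nonexpansive, hence $\alpha_{1,n}=1/2$-averaged, and $\alpha_{1,n}\leq 1/(1+\varepsilon)$ holds because $\varepsilon<1/2$. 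Since $B$ is $\beta$-cocoercive, a direct application of Proposition~\ref{p:av1} shows that $T_{2,n}$ is $\alpha_{2,n}$-averaged with $\alpha_{2,n}=\gamma_n/(2\beta)\in\left]0,1/(1+\varepsilon)\right]$ thanks to the bound $\gamma_n\leq 2\beta/(1+\varepsilon)$. A short computation then gives
\begin{equation}
\phi_n=\frac{\alpha_{1,n}+\alpha_{2,n}-2\alpha_{1,n}\alpha_{2,n}}
{1-\alpha_{1,n}\alpha_{2,n}}=\frac{2\beta}{4\beta-\gamma_n},
\qquad
\frac{(1-\varepsilon)(1+\varepsilon\phi_n)}{\phi_n}
=(1-\varepsilon)\bigg(2+\varepsilon-\frac{\gamma_n}{2\beta}\bigg),
\end{equation}
so the range \eqref{ewfw7reww25} for $\lambda_n$ coincides with that in \eqref{ewfw7reww23a}. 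Boundedness of $(\gamma_n)$ and $(\lambda_n)$ together with the summability of $(\|a_n\|)$ and $(\|b_n\|)$ yields $\sum_n\lambda_n\|e_{i,n}\|<\pinf$ for $i\in\{1,2\}$. Finally, the standard identity $\Fix(J_{\gamma A}(\Id-\gamma B))=\zer(A+B)$ for any $\gamma>0$ gives $S=\zer(A+B)\neq\emp$, so all hypotheses of Corollary~\ref{koGhn843gd4} are met.

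Item~\ref{lI8uhT612i} is then an immediate restatement of Corollary~\ref{koGhn843gd4}\ref{koGhn843gd4ii++}. For \ref{lI8uhT612ii}, I would fix $x\in\zer(A+B)$ and note that $T_{2,n}x_n-x_n-T_{2,n}x+x=-\gamma_n(Bx_n-Bx)$, so Corollary~\ref{koGhn843gd4}\ref{koGhn843gd4ii+} yields $\sum_n\gamma_n^2\|Bx_n-Bx\|^2<\pinf$; since $\gamma_n\geq\varepsilon$, the claim follows. An immediate consequence that will drive the rest of the proof is $Bx_n\to Bx$ for every $x\in S$.

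For the weak convergence in \ref{lI8uhT612iii}, the task reduces via Corollary~\ref{koGhn843gd4}\ref{koGhn843gd4iii} to showing that every weak sequential cluster point $\bar x$ of $(x_n)_{n\in\NN}$ lies in $\zer(A+B)$. Setting $y_n=J_{\gamma_n A}(x_n-\gamma_n Bx_n)$, item~\ref{lI8uhT612i} gives $y_n-x_n\to 0$, so $y_{n_k}\weakly\bar x$ along any subsequence with $x_{n_k}\weakly\bar x$. The resolvent identity yields $u_n=\gamma_n^{-1}(x_n-y_n)-Bx_n\in Ay_n$, and I would verify that $u_{n_k}\to -B\bar x$: the first term tends to $0$ since $\gamma_n\geq\varepsilon$, while \ref{lI8uhT612ii} gives $Bx_n\to Bx$ for some $x\in S$, which coincides with $B\bar x$ because $B$ is maximally monotone and Lipschitz (so weak-to-weak continuous, yielding $Bx_{n_k}\weakly B\bar x$). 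Weak-strong closedness of $\gra A$ then delivers $-B\bar x\in A\bar x$, i.e., $\bar x\in\zer(A+B)$.

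For the strong convergence statement \ref{lI8uhT612iv}, case \ref{lI8uhT612ivc} follows directly from Corollary~\ref{koGhn843gd4}\ref{koGhn843gd4iii}. In case \ref{lI8uhT612ivb}, having already $Bx_n\to B\bar x$ and $x_n\weakly\bar x$ with $B\bar x\in\{B\bar x\}$, demiregularity of $B$ at $\bar x$ gives $x_n\to\bar x$. In case \ref{lI8uhT612iva}, I would apply the same argument to the sequence $(y_n,u_n)_{n\in\NN}$ in $\gra A$ constructed above: $y_n\weakly\bar x$, $u_n\to -B\bar x\in A\bar x$, so demiregularity of $A$ yields $y_n\to\bar x$, and then $x_n=y_n+(x_n-y_n)\to\bar x$. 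I expect the main obstacle to be merely a careful bookkeeping step—matching the range \eqref{ewfw7reww25} with the range \eqref{ewfw7reww23a} through the explicit formula for $\phi_n$, and propagating the $\gamma_n\geq\varepsilon$ bound to upgrade the squared summability statements into the pointwise strong convergence $Bx_n\to B\bar x$ that unlocks the limit identification.
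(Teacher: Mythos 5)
Your proposal follows the paper's proof almost exactly: the same decomposition $T_{1,n}=J_{\gamma_n A}$, $T_{2,n}=\Id-\gamma_n B$, $e_{1,n}=a_n$, $e_{2,n}=-\gamma_n b_n$, the same averagedness constants $1/2$ and $\gamma_n/(2\beta)$, the same identification of the $\lambda_n$-range with \eqref{ewfw7reww23a}, and the same limit-identification argument through the auxiliary sequences $y_n=J_{\gamma_n A}(x_n-\gamma_n Bx_n)$ and $u_n=\gamma_n^{-1}(x_n-y_n)-Bx_n\in Ay_n$. The one point you must repair is the justification that $B\bar x=Bx$: a nonlinear Lipschitz operator is \emph{not} weak-to-weak continuous in general (e.g., $x\mapsto\|x\|e$ with $\|e\|=1$ is $1$-Lipschitz, yet it sends an orthonormal sequence, which converges weakly to $0$, to the constant $e\neq 0$), so you cannot infer $Bx_{n_k}\weakly B\bar x$ from Lipschitz continuity. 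The conclusion you need is nonetheless true and is exactly what the paper invokes: since the cocoercive operator $B$ is maximally monotone, its graph is sequentially closed in $\HH^{\mathrm{weak}}\times\HH^{\mathrm{strong}}$, so $x_{n_k}\weakly\bar x$ together with $Bx_{n_k}\to Bx$ forces $Bx=B\bar x$. With that substitution your argument is complete and coincides with the paper's; the remaining items, including the three demiregularity cases in \ref{lI8uhT612iv}, are handled as in the paper.
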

\begin{proof}
We are going to establish the results as an application of
Corollary~\ref{koGhn843gd4}. Set 
\begin{equation}
(\forall n\in\NN)\;\;
\label{ooio9o0vd54}
T_{1,n}=J_{\gamma_n A},\quad T_{2,n}=\Id-\gamma_n B,\quad 
e_{1,n}=a_n,\quad\text{and}\quad e_{2,n}=-\gamma_nb_n.
\end{equation}
Then, for every $n\in\NN$, $T_{1,n}$ is $\alpha_{1,n}$-averaged
with $\alpha_{1,n}=1/2$ 
\cite[Remark~4.24(iii) and Corollary~23.8]{Livre1} and  
$T_{2,n}$ is $\alpha_{2,n}$-averaged with 
$\alpha_{2,n}=\gamma_n/(2\beta)$ \cite[Proposition~4.33]{Livre1}.
Moreover, for every $n\in\NN$,  
\begin{equation}
\label{ewfw7reww22s}
\phi_n=\frac{\alpha_{1,n}+\alpha_{2,n}-2\alpha_{1,n}\alpha_{2,n}}
{1-\alpha_{1,n}\alpha_{2,n}}=\frac{2\beta}{4\beta-\gamma_n} 
\end{equation}
and, therefore,
\begin{equation}
\label{ewfw7reww22t}
\lambda_n\in\left[\varepsilon,(1-\varepsilon)
(1+\varepsilon\phi_n)/\phi_n\right],
\end{equation}
in conformity with \eqref{ewfw7reww23a}. In turn,
Proposition~\ref{paQg507-03''}\ref{paQg507-03''i} yields
\begin{equation}
\label{ewfw7reww24z}
(\forall n\in\NN)\quad\lambda_n\leq\frac{1}{\phi_n}+\varepsilon
\leq\frac{1}{\alpha_{1,n}}+\varepsilon=2+\varepsilon.
\end{equation}
Consequently,
\begin{equation}
\label{ewfw7reww24t}
\sum_{n\in\NN}\lambda_n\|e_{1,n}\|=(2+\varepsilon)
\sum_{n\in\NN}\|a_n\|<\pinf\quad\text{and}\quad
\sum_{n\in\NN}\lambda_n\|e_{2,n}\|\leq
2(2+\varepsilon)\beta\sum_{n\in\NN}\|b_n\|<\pinf. 
\end{equation}
On the other hand, \cite[Proposition~25.1(iv)]{Livre1} yields
\begin{equation}
\label{e:fz1}
(\forall n\in\NN)\quad\zer(A+B)=\Fix(T_{1,n} T_{2,n}). 
\end{equation}
Altogether, $S=\zer(A+B)\neq\emp$, \eqref{ooio9o0vd51} is 
satisfied, and \eqref{e:23juillet2007-1} is an instance of 
\eqref{ooio9o0vd59}. 

\ref{lI8uhT612i}: This is a consequence of 
Corollary~\ref{koGhn843gd4}\ref{koGhn843gd4ii++} and
\eqref{ooio9o0vd54}.

\ref{lI8uhT612ii}: 
Corollary~\ref{koGhn843gd4}\ref{koGhn843gd4ii+} and
\eqref{ooio9o0vd54} yield 
\begin{align}
\sum_{n\in\NN}\|Bx_n-Bx\|^2
&=\sum_{n\in\NN}\gamma_n^{-2}\|T_{2,n}x_n-x_n-T_{2,n}x+x\|^2
\nonumber\\
&\leq\varepsilon^{-2}\sum_{n\in\NN}\|T_{2,n}x_n-x_n-T_{2,n}x+x\|^2
\nonumber\\
&<\pinf.
\end{align}

\ref{lI8uhT612iii}: Let $(k_n)_{n\in\NN}$ be a strictly
increasing sequence in $\NN$ and let $y\in\HH$ be such that 
$x_{k_n}\weakly y$. In view of 
Corollary~\ref{koGhn843gd4}\ref{koGhn843gd4iii}, it
remains to show that $y\in\zer(A+B)$. We set
\begin{equation}
\label{e:burn}
(\forall n\in\NN)\quad y_n=J_{\gamma_n A}(x_n-\gamma_nBx_n)\quad
\text{and}\quad u_n=\frac{x_n-y_n}{\gamma_n}-Bx_n,
\end{equation}
and note that
\begin{equation}
\label{e:jhUy74d}
(\forall n\in\NN)\quad u_n\in Ay_n.
\end{equation}
We derive from \ref{lI8uhT612i} that $y_n-x_n\to 0$, hence 
$y_{k_n}\weakly y$. Now let $x\in\zer(A+B)$. Then 
\ref{lI8uhT612ii} 
implies that $Bx_{n}\to Bx$, hence $u_n\to -Bx$. However, since 
\eqref{e:cocoercive} implies that $B$ is maximally monotone 
\cite[Example~20.28]{Livre1}, it follows from the properties 
$x_{k_n}\weakly y$ and $Bx_{k_n}\to Bx$ that $By=Bx$
\cite[Proposition~20.33(ii)]{Livre1}. Thus, $y_{k_n}\weakly y$
and $u_{k_n}\to -By$, and it therefore follows from 
\eqref{e:jhUy74d} and \cite[Proposition~20.33(ii)]{Livre1}
that $-By\in Ay$, i.e., $y\in\zer(A+B)$.

\ref{lI8uhT612iv}: By \ref{lI8uhT612iii}, there exists 
$x\in\zer(A+B)$ such that $x_n\weakly x$. In addition, we derive
from \eqref{e:burn}, \ref{lI8uhT612i}, and \ref{lI8uhT612ii}
that $y_n\weakly x$ and $u_n\to -B{x}\in A{x}$. 

\ref{lI8uhT612iva}:
Suppose that $A$ is demiregular at $x$. Then
\eqref{e:jhUy74d} yields $y_n\to{x}$ and 
\ref{lI8uhT612i} implies that $x_n\to{x}$.

\ref{lI8uhT612ivb}:
Suppose that $B$ is demiregular at $x$. Since $x_n\weakly x$ 
and $Bx_n\to Bx$ by \ref{lI8uhT612ii}, we have $x_n\to x$. 

\ref{lI8uhT612ivc}:
This follows from \ref{lI8uhT612iii} and 
Corollary~\ref{koGhn843gd4}\ref{koGhn843gd4iii}.
\end{proof}

\begin{remark}
Proposition~\ref{lI8uhT612} extends \cite[Corollary~6.5]{Opti04}
and \cite[Theorem~2.8]{Sico10}, which impose the additional
assumption that the relaxation parameters $(\lambda_n)_{n\in\NN}$
satisfy $(\forall n\in\NN)$ $\lambda_n\leq 1$. By contrast, the
relaxation range allowed in \eqref{ewfw7reww25} can be an 
arbitrarily large interval in $\left]0,2\right[$ and the maximum
relaxation is always strictly greater than 1.
\end{remark}

\begin{remark}
In Proposition~\ref{lI8uhT612}, the parameters
$(\gamma_n)_{n\in\NN}$ are allowed to vary at each iteration.
Now suppose that they are restricted to a fixed value
$\gamma\in\left]0,2\beta\right[$. Then, as in \eqref{e:zn},
\eqref{e:23juillet2007-1} reduces to
$x_{n+1}=x_n+\lambda_n(Tx_n+e_n-x_n)$, where 
$T=J_{\gamma A}(\Id-\gamma B)$ is $\alpha$-averaged and 
$e_n$ is given by \eqref{e:5thonf}. In this special
case, the weak convergence of $(x_n)_{n\in\NN}$ to a zero of $A+B$ 
can be derived from 
Proposition~\ref{paQg507-04a}\ref{paQg507-04a-iii} 
applied with $T_n\equiv T$, $\alpha_n\equiv\alpha$, and 
$(\lambda_n)_{n\in\NN}$ in $\left]0,1/\alpha\right[$ satisfying
$\sum_{n\in\NN}\lambda_n(1/\alpha-\lambda_n)=\pinf$ (see also 
\cite[Proposition~5.15(iii)]{Livre1}). This approach
was proposed in \cite[Theorem~25.8(i)]{Livre1} with the constant
$\alpha=\widetilde{\phi}(1/2,\gamma/(2\beta))=1/(1/2+
\text{\rm min}\{1,\beta/\gamma\})$ of \eqref{e:opti2004}, 
and revisited in 
\cite[Lemma~4.4]{Cond13} in the case of subdifferentials of convex
functions with the sharper constant
$\alpha={\phi}(1/2,\gamma/(2\beta))=2\beta/(4\beta-\gamma)$ of 
\cite[Theorem~3(b)]{Ogur02} (see Remark~\ref{KWghy5-f-09}).
\end{remark}

\begin{proposition}
\label{lI8uhT614}
Let $\beta\in\RPP$, let 
$\varepsilon\in\left]0,\min\{1/2,\beta\}\right[$, let $x_0\in\HH$, 
let $f\in\Gamma_0(\HH)$, let $g\colon\HH\to\RR$ be convex and 
differentiable with a $1/\beta$-Lipschitz gradient, and suppose 
that the set $S$ of solutions to the problem 
\begin{equation}
\label{ewfw7reww24u}
\minimize{x\in\HH}{f(x)+g(x)}
\end{equation}
is nonempty. Furthermore, let $(\gamma_n)_{n\in\NN}$ be a sequence 
in $\left[\varepsilon,2\beta/(1+\varepsilon)\right]$, and let 
$(a_n)_{n\in\NN}$ and $(b_n)_{n\in\NN}$ be sequences in $\HH$ such 
that $\sum_{n\in\NN}\|a_n\|<\pinf$ and 
$\sum_{n\in\NN}\|b_n\|<\pinf$. For every $n\in\NN$, let 
$\lambda_n\in\left[\varepsilon,%
(1-\varepsilon)(2+\varepsilon-\gamma_n/(2\beta))\right]$ and set
\begin{equation}
\label{ewfw7reww24v}
x_{n+1}=x_n+\lambda_n\Big(\prox_{\gamma_n f}
\big(x_n-\gamma_n(\nabla g(x_n)+b_n)\big)+a_n-x_n\Big).
\end{equation}
Then the following hold:
\begin{enumerate}
\item
\label{lI8uhT614i}
$\sum_{n\in\NN}
\|\prox_{\gamma_n f}(x_n-\gamma_n\nabla g(x_n))-x_n\|^2<\pinf$.
\item
\label{lI8uhT614ii}
Let $x\in S$. 
Then $\sum_{n\in\NN}\|\nabla g(x_n)-\nabla g(x)\|^2<\pinf$.
\item
\label{lI8uhT614iii}
$(x_n)_{n\in\NN}$ converges weakly to a point in $S$.
\item
\label{lI8uhT614iv}
Suppose that $\partial f$ or $\nabla g$ is demiregular at every 
point in $S$, or that $\inte S\neq\emp$. Then $(x_n)_{n\in\NN}$ 
converges strongly to a point in $S$.
\end{enumerate}
\end{proposition}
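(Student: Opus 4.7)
The plan is to cast Proposition~\ref{lI8uhT614} as an immediate specialization of Proposition~\ref{lI8uhT612}, with $A=\partial f$ and $B=\nabla g$. For this to work I need three standard reductions from convex analysis, which will constitute the first part of the proof.

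First, I would verify the monotone-operator hypotheses. Since $f\in\Gamma_0(\HH)$, the operator $A=\partial f$ is maximally monotone (Moreau--Rockafellar). Since $g$ is convex, differentiable with $(1/\beta)$-Lipschitz gradient, the Baillon--Haddad theorem gives that $B=\nabla g$ is $\beta$-cocoercive, which is exactly \eqref{e:cocoercive}. By \eqref{e:prox2}, $J_{\gamma_n\partial f}=\prox_{\gamma_n f}$, so recursion \eqref{ewfw7reww24v} coincides literally with \eqref{e:23juillet2007-1}. Next, I would identify the set $S$ of minimizers of $f+g$ with $\zer(A+B)$: Fermat's rule gives $S=\zer(\partial(f+g))$, and since $g$ is finite and continuous on $\HH$, the sum rule $\partial(f+g)=\partial f+\nabla g$ holds, so $S=\zer(\partial f+\nabla g)\neq\emp$.

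With these identifications in place, the hypotheses of Proposition~\ref{lI8uhT612} are satisfied: $\varepsilon\in\left]0,\min\{1/2,\beta\}\right[$, the stepsizes $(\gamma_n)_{n\in\NN}$ lie in $[\varepsilon,2\beta/(1+\varepsilon)]$, $(a_n)_{n\in\NN}$ and $(b_n)_{n\in\NN}$ are absolutely summable in $\HH$, and $(\lambda_n)_{n\in\NN}$ lies in the required relaxation interval \eqref{ewfw7reww25}. Applying Proposition~\ref{lI8uhT612}\ref{lI8uhT612i} gives \ref{lI8uhT614i}; applying \ref{lI8uhT612ii} and noting $Bx=\nabla g(x)$ gives \ref{lI8uhT614ii}; and \ref{lI8uhT612iii} gives \ref{lI8uhT614iii}.

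For \ref{lI8uhT614iv}, I invoke Proposition~\ref{lI8uhT612}\ref{lI8uhT612iv}: if $\partial f$ (resp.\ $\nabla g$) is demiregular at every point of $S$, or if $\inte S\neq\emp$, then $(x_n)_{n\in\NN}$ converges strongly to a point of $S=\zer(\partial f+\nabla g)$. There is no real obstacle here beyond making sure the identifications $S=\zer(A+B)$, $J_{\gamma_n A}=\prox_{\gamma_n f}$ and the cocoercivity of $\nabla g$ are stated and cited cleanly; the slightly subtle point is the appeal to Baillon--Haddad, since it is the only nontrivial ingredient ensuring that $\nabla g$ meets the $\beta$-cocoercivity requirement of Proposition~\ref{lI8uhT612}. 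Everything else is a direct transcription.
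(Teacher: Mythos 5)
Your proposal is correct and follows exactly the route the paper takes: the paper's proof is a one-line reduction to Proposition~\ref{lI8uhT612} with $A=\partial f$ and $B=\nabla g$, citing the standard arguments (Fermat's rule, the sum rule, Baillon--Haddad, and $\prox_{\gamma f}=J_{\gamma\partial f}$) that you spell out explicitly. No discrepancies.
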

\begin{proof}
Using the same arguments as in \cite[Section~27.3]{Livre1}, one 
shows that this is the specialization of 
Proposition~\ref{lI8uhT612} to the case when $A=\partial f$ 
and $B=\nabla g$.
\end{proof}

\end{document}